\documentclass{article}
\usepackage{a4wide}
\usepackage{amsmath}
\usepackage{amssymb}
\usepackage{amsthm}
\usepackage{marginnote}
\usepackage{dsfont}

\newcommand{\1}{\mathds{1}}

\newtheorem{thm}{Theorem}

\newtheorem{remark}{Remark}

\newtheorem{lemma}{Lemma}

\newtheorem*{prop*}{Proposition}

\newtheorem*{thm*}{Theorem}

\newcommand{\bbr}{{\mathbb R}}
\newcommand{\bbs}{{\mathbb S}}
\newcommand{\bbn}{{\mathbb N}}

\def\be{\begin{equation}}
\def\ee{\end{equation}}

\newcommand{\bea}{\begin{eqnarray}}

\newcommand{\eea}{\end{eqnarray}}
\newcommand{\bean}{\begin{eqnarray*}}

\newcommand{\eean}{\end{eqnarray*}}

\newcounter{mnotecount}[section]

\renewcommand{\themnotecount}{\thesection.\arabic{mnotecount}}

\newcommand{\mnote}[1]
{\protect{\stepcounter{mnotecount}}$^{\mbox{\footnotesize
$
\bullet$\themnotecount}}$ \marginpar{
\raggedright\tiny\em
$\!\!\!\!\!\!\,\bullet$\themnotecount: #1} }

\usepackage{authblk}

\begin{document}
\title{The massless Boltzmann equation in Minkowski spacetime}

\author[1]{Ho Lee\footnote{holee@khu.ac.kr}}
\author[2]{Ernesto Nungesser\footnote{em.nungesser@upm.es}}
\author[3]{John Stalker\footnote{stalker@maths.tcd.ie}}
\author[4]{Paul Tod\footnote{tod@maths.ox.ac.uk}}

\affil[1]{Department of Mathematics and Research Institute for Basic Science, College of Sciences, Kyung Hee University, Seoul 02447, Republic of Korea}
\affil[2]{M2ASAI, Universidad Polit{\'e}cnica de Madrid, ETSI Navales, Avda. de la Memoria
4, 28040 Madrid, Spain}
\affil[3]{School of Mathematics, Trinity College, Dublin 2, Ireland and Hamilton Mathematics Institute, Dublin 2, Ireland}
\affil[4]{Mathematical Institute, University of Oxford, Oxford OX2 6GG}

\maketitle

\begin{abstract}
We study the spatially homogeneous, massless Boltzmann equation in Minkowski spacetime for a certain range of hard and soft interactions. For hard interactions, we derive a Povzner-type inequality for massless particles and show that solutions exist for all time into the future. For soft interactions, we employ singular weights to control singularities at $ p = 0 $, which arise from the masslessness of particles, to obtain local existence. These results, which are among rather few proofs of existence for the massless Boltzmann equation, are motivated by our earlier work on the massless Einstein--Boltzmann system in certain cosmological settings.
\end{abstract}


%
\section{Introduction}

When considering partial differential equations it is natural to investigate the existence of local solutions and determine whether they can be extended globally. In this paper, we are interested in relativistic kinetic theory and will study the existence of local or global solutions to the \emph{massless} Boltzmann equation. For a comprehensive background on relativistic (and also non-relativistic) kinetic theory, we refer to \cite{Andreasson,CIP, CK, Ehlers,EMV, GLW, Stewart}.

For massive particles, the local existence of solutions to the relativistic Boltzmann equation, either on a fixed background or coupled to the Einstein equations, was considered in \cite{Bancel, BC}. These results have been generalised to show global existence in certain cases (see, e.g., \cite{GS, Lee2, LLN, LN, LN1, LN3, LN4, LN5, NDT, NT, SY, TAD}). To the best of our knowledge, the only existing results for \emph{massless} particles, apart from our recent work \cite{Lee21, LNST1,LNST2,LNT1}, are those in \cite{BDHMN, Bichteler, STR}. In \cite{Bichteler}, the local existence of solutions to the relativistic Boltzmann equation on a fixed background was obtained for both massless and massive particles. The fixed background is allowed to be quite general and no symmetries are assumed, but the collision cross-section $\sigma$ is assumed to be bounded (see (10) of \cite{Bichteler}). In \cite{BDHMN}, a particular global solution to the massless Boltzmann equation was constructed in a fixed cosmological background with a constant cross-section. Very recently in \cite{STR}, the future global stability of Maxwell--J{\"u}ttner equilibria and vacuum solutions to the massless Boltzmann equation was studied with a constant cross-section.

In this paper, we study the massless Boltzmann equation on a fixed Minkowski background but will allow for a wider range of collision cross-sections:
\begin{align*}
\sigma = \varrho^\gamma , \qquad - 2 < \gamma \leq 1 ,
\end{align*}
where $ \varrho $ denotes the relative momentum of two colliding particles (see \eqref{varrho} for its definition), and which contains both hard ($\gamma \geq 0 $) and soft ($\gamma < 0 $) interactions. The case of a constant cross-section corresponds to $\gamma=0$.

Our interest in massless particles comes from the fact that in cosmology, near the initial singularity, massive particles are expected to behave like massless ones in the sense that the ratio of the pressure to the energy density tends to $ 1 / 3 $, which corresponds to the massless case. Also, as a general rule, rest mass can be neglected if it is small compared to energy. However, existing literature on the massless Boltzmann equation is surprisingly scarce. There are several reasons for this scarcity. First, most available literature concerns the non-relativistic Boltzmann equation, and it is unnatural to consider massless particles. Second, the energy of massive particles is bounded away from zero, but this boundedness does not hold for massless particles. This introduces the extra complication that the energy of a massless particle can be arbitrarily small, which introduces a singularity in the collision term at $ p = 0 $. Also the distribution function is now supported on the light cone, which is not a smooth submanifold of the spacetime cotangent bundle, rather than on the mass  hyperboloid, which is. For these reasons, existence proofs for the massless Boltzmann or Einstein--Boltzmann system call for a new start -- one cannot just carry over proofs from the massive case.

In the present paper we seek to fill that gap by proving local existence for the massless Boltzmann equation for both soft and hard interactions. In the latter case we are able to prove global existence. It is important to note that our results should not to be confused with results concerning the stability of Minkowski spacetime, since here our background, Minkowski spacetime, is fixed and not coupled to the Einstein equations. In other words we are considering only the special relativistic case.

The paper is organised as follows. Below, we introduce the massless Boltzmann equation in Section \ref{sec MB} and state our main results in Section \ref{sec main}. In Section \ref{sec BE}, we present the basic estimates. In Section \ref{sec PM}, we prove the main results. We consider the modified Boltzmann equations \eqref{Bmod} and \eqref{Bmods}, where the collision cross-sections are bounded. We obtain the existence of solutions to \eqref{Bmod} and \eqref{Bmods} in Sections \ref{sec ECO} and \ref{sec ECOs}, respectively. In Sections \ref{sec E} and \ref{sec Es}, we prove the main results. We obtain global existence for hard interactions in Theorem \ref{thm hard} and local existence for soft interactions in Theorem \ref{thm soft}.

\subsection{The massless Boltzmann equation} \label{sec MB}
Let us consider the relativistic Boltzmann equation with massless particles. By the mass shell condition the massless particles with momentum $ p_\alpha $ satisfy
\begin{align*}
p_\alpha p^\alpha = - m^2 = 0 ,
\end{align*}
so that we obtain
\begin{align}
p^0 = \sqrt{ m^2 + | p |^2 } = | p | , \label{p^0}
\end{align}
where $ p $ denotes the three-dimensional vector $ ( p_1 , p_2 , p_3 ) $, and $ | \cdot | $ is the usual Euclidean norm. The relative momentum $ \varrho \geq 0 $ of two colliding particles with momenta $ p_\alpha $ and $ q_\alpha $ is defined by
\begin{align}\label{varrho}
\varrho^2 : = ( p_\alpha - q_\alpha ) ( p^\alpha - q^\alpha ) = 2 ( p^0 q^0 - p \cdot q ) .
\end{align}
In the case of massive particles we need to introduce a quantity $ s $, called the total energy, but in the massless case we have $ \varrho = \sqrt{ s } $. We assume that the collision cross-section does not depend on the scattering angle, and the spatially homogeneous, massless Boltzmann equation reads
\begin{align}\label{B}
\frac{ \partial f }{ \partial t } = \int_{ \bbr^3 } \int_{ \bbs^2 } \frac{ \varrho^{ 2 + \gamma } }{ p^0 q^0 } ( f ( p' ) f ( q' ) - f ( p ) f ( q ) ) \, d \omega \, d^3 q .
\end{align}
We will consider the case of both hard and soft interactions:
\begin{align*}
- 2 < \gamma \leq 1 ,
\end{align*}
but the analysis will be separated into the cases of hard $ ( \gamma \geq 0 ) $ and soft $ ( \gamma < 0 ) $ interactions. For the post-collision momentum we write
\begin{align*}
n^0 = p^0 + q^0 , \qquad n = p + q ,
\end{align*}
to have
\begin{align}
p'^0 & = \frac{ n^0 }{ 2 } + \frac{ n \cdot \omega }{ 2 } , \label{p'^0} \\
q'^0 & = \frac{ n^0 }{ 2 } - \frac{ n \cdot \omega }{ 2 } , \label{q'^0}
\end{align}
and
\begin{align}
p' & = \frac{ n }{ 2 } + \frac{ \varrho }{ 2 } \left( \omega + \frac{ ( n \cdot \omega ) n }{ \varrho ( n^0 + \varrho ) } \right) , \label{p'} \\
q' & = \frac{ n }{ 2 } - \frac{ \varrho }{ 2 } \left( \omega + \frac{ ( n \cdot \omega ) n }{ \varrho ( n^0 + \varrho ) } \right) , \label{q'}
\end{align}
for $ \omega \in \bbs^2 $. The above parametrization was used in \cite{LNT1}, which can be derived from \cite{SY} by taking $ \sqrt{ s } = \varrho $. By energy conservation we have
\begin{align}\label{energy}
n'^0 = n^0 ,
\end{align}
where $ n'^0 = p'^0 + q'^0 $, and the change of variables between $ ( p , q ) $ and $ ( p' , q' ) $ implies
\begin{align}\label{dpdq}
\frac{ 1 }{ p^0 q^0 } \, d \omega \, d^3 p \, d^3 q = \frac{ 1 }{ p'^0 q'^0 } \, d \omega \, d^3 p' \, d^3 q' .
\end{align}
The following are well-known facts for the relativistic Boltzmann equation
\begin{align}
\frac{ d }{ d t } \int_{ \bbr^3 } f \, d^3 p = 0 , \qquad \frac{ d }{ d t } \int_{ \bbr^3 } f p^0 d^3 p = 0 , \label{conserved}
\end{align}
which hold also for the massless Boltzmann equation.

\subsection{Main results}\label{sec main}
In this part we state our main results. We define the following weighted $ L^1 $ and $ L^\infty $ norms:
\begin{align*}
\| f \|_{ L^1_r } = \int f ( p ) ( p^0 )^r d^3 p , \qquad \| f \|_{ L^\infty_w } = \sup_{ p \in \bbr^3 } | w ( p ) f ( p ) | , \qquad w ( p ) = p^0 e^{ p^0 } .
\end{align*}
Then, we obtain the following results. In Theorem \ref{thm hard}, we obtain global existence for hard interactions. In Theorem \ref{thm soft}, we only obtain local existence for soft interactions. The proof will be given in Sections \ref{sec E} and \ref{sec Es}, respectively.

\begin{thm}
Let $ 0 \leq f_0 \in L^1 ( \bbr^3 ) \cap L^1_2 ( \bbr^3 ) $ be an initial datum of the Boltzmann equation \eqref{B} with $ 0 \leq \gamma \leq 1 $. Then, there exists a unique non-negative solution $ f \in C^1 ( [ 0 , \infty ) ; L^1 ( \bbr^3 ) \cap L^1_1 ( \bbr^3 ) ) $. If, in addition, $ f_0 \in L^1_r ( \bbr^3 ) $ with $ r \geq 1 $, then for any $ T > 0 $,
\begin{align}
\sup_{ t \in [ 0 , T ] } \| f ( t ) \|_{ L^1_r } \leq C , 
\end{align}
where $ C > 0 $ depends on $ T $ and $r$.
\end{thm}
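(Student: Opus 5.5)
We follow the classical Arkeryd/Mischler–Wennberg route for the hard-sphere Boltzmann equation, adapted to the massless kinematics. The proof has four steps.

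\emph{Step 1: truncation and approximate solutions.} We replace the kernel $\varrho^{2+\gamma}/(p^0q^0)$ by a bounded cut-off version. Two points need care. For $\gamma\ge 0$ the kernel vanishes at $p=0$ or $q=0$ like $(p^0)^{\gamma/2}$ rather than blowing up, since $\varrho^2\le 4p^0q^0$. Also
\[
\varrho^{2+\gamma}/(p^0q^0)\le 2^{2+\gamma}(p^0q^0)^{\gamma/2}\le C(p^0+q^0)^{\gamma}\le C(1+p^0)(1+q^0),
\]
so the kernel only grows. We therefore multiply it by $\1_{\{p^0+q^0\le n\}}$, or use $\min\{\cdot,n\}$; this is the modified equation \eqref{Bmod}. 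For fixed $n$ the collision operator is Lipschitz on $L^1$, so a Banach fixed point argument gives a global solution $f_n\in C^1([0,\infty);L^1)$. Nonnegativity comes from writing the equation as $\partial_t f+f\,\nu_n(f)=Q^+_n(f,f)$ and running a monotone iteration. Because the cut-off is symmetric under $(p,q)\leftrightarrow(p',q')$ and \eqref{dpdq} holds, mass and energy are conserved exactly, uniformly in $n$.

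\emph{Step 2: Povzner-type inequality for massless particles.} This is the key estimate and the main obstacle. For $\psi(p)=(p^0)^r$ with $r>1$, the weak form reads
\[
\frac{d}{dt}\int f\psi=\tfrac12\iint\!\!\int \frac{\varrho^{2+\gamma}}{p^0q^0}ff_*\bigl(\psi'+\psi'_*-\psi-\psi_*\bigr).
\]
By \eqref{p'^0}–\eqref{q'^0}, $p'^0=\tfrac{n^0}{2}(1+\hat n\cdot\omega\,|n|/n^0)$, so after integrating over $\omega$,
\[
\int_{\bbs^2}\bigl((p'^0)^r+(q'^0)^r\bigr)d\omega = 4\pi (n^0)^r\,\Phi_r(|n|/n^0),
\]
where $\Phi_r$ is explicit. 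We must show $\Phi_r(x)\le\kappa_r<1$ uniformly in $x\in[0,1]$; this is the strict-convexity gain. The delicate point is $|n|=n^0$, i.e.\ collinear momenta, where $\varrho=0$. There the kernel vanishes, so the loss of the gain does no harm: the factor $\varrho^{2+\gamma}$ compensates. Combined with the elementary inequality
\[
(p^0+q^0)^r-(p^0)^r-(q^0)^r\le C_r\bigl((p^0)^{r-1}q^0+p^0(q^0)^{r-1}\bigr),
\]
this gives
\[
\frac{d}{dt}\|f\|_{L^1_r}\le C\bigl(\|f\|_{L^1_{r-1+\gamma/2}}\|f\|_{L^1_{1+\gamma/2}}+\dots\bigr)-c\,(\text{positive term}).
\]
Since $\gamma\le 1$, all weights on the right are at most $r$, with lower moments bounded by interpolation with mass and energy. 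The result is a linear (Gronwall) bound $\|f_n(t)\|_{L^1_r}\le C(T,r)$ uniformly in $n$. If the sign bookkeeping at $\varrho\to0$ resists, the fallback is to drop the loss term entirely and use only the elementary inequality; this still yields Gronwall growth and suffices for finite $T$.

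\emph{Step 3: passing to the limit.} From the $L^1_2$ bound (Step 2 with $r=2$) and the truncation error being supported on $\{p^0+q^0>n\}$, we show $\{f_n\}$ is Cauchy in $C([0,T];L^1\cap L^1_1)$. The $L^1_1$ difference estimate for $f_n-f_m$ costs one weight $(1+p^0)(1+q^0)$, which is controlled by the uniform $L^1_2$ moments. Gronwall then gives the Cauchy property, and the limit $f$ solves \eqref{B}, is nonnegative, conserves mass and energy, and inherits the $L^1_r$ bounds.

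\emph{Step 4: uniqueness and regularity.} Uniqueness follows from the same difference estimate, applied to two solutions with bounded $L^1_2$ norm on $[0,T]$. The $C^1$ regularity in time follows because $Q(f,f)\in C([0,T];L^1\cap L^1_1)$, again using the $L^1_2$ bound.
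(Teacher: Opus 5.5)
Your proposal is correct and is essentially the paper's argument. It has the same three ingredients. First, a collision-invariant cut-off: the paper truncates to $\varrho\le n$ rather than $p^0+q^0\le n$, and both work, whereas your alternative $\min\{\varrho^{2+\gamma}/(p^0q^0),n\}$ would break the $(p,q)\leftrightarrow(p',q')$ symmetry, because $p^0q^0$ is not collision-invariant. Second, uniform $L^1_r$ bounds from the elementary inequality $(p'^0)^r+(q'^0)^r-(p^0)^r-(q^0)^r\le C_r\bigl((p^0)^{r-1}q^0+p^0(q^0)^{r-1}\bigr)$ plus Gronwall; this is exactly your fallback, since the paper never uses a strict Povzner gain. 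Third, a Cauchy estimate in $L^1\cap L^1_1$ whose truncation error is paid for by the uniform $L^1_2$ moment.

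Your worry about collinear momenta is unfounded anyway. As $\omega$ ranges over $\bbs^2$, $p'^0$ is uniformly distributed on $\bigl[\tfrac{n^0-|n|}{2},\tfrac{n^0+|n|}{2}\bigr]$, so $\Phi_r$ is increasing with $\Phi_r(1)=2/(r+1)<1$. Hence the gain holds uniformly in $x\in[0,1]$.
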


\begin{thm}
Let $ 0 \leq f_0 \in L^1 ( \bbr^3 ) \cap L^1_{ - 2 } ( \bbr^3 ) \cap L^\infty_w ( \bbr^3 ) $ be an initial datum of the Boltzmann equation \eqref{B} with $ - 2 < \gamma < 0 $. Then, there exists a $ T > 0 $ such that the Boltzmann equation \eqref{B} has a unique non-negative solution $ f \in C^1 ( [ 0 , T ] ; L^1 ( \bbr^3 ) \cap L^1_{ - 1 } ( \bbr^3 ) ) $ satisfying
\begin{align}
\sup_{ t \in [ 0 , T ] } ( \| f ( t ) \|_{ L^1_{ - 2 } } + \| f ( t ) \|_{ L^\infty_w } ) \leq C , 
\end{align}
where $ C > 0 $ depends on $ T $.
\end{thm}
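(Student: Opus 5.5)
We prove Theorem \ref{thm soft} by truncating the singular kernel, solving the truncated problem, and removing the truncation using bounds that do not depend on it. Fix $n\in\bbn$ and replace $\varrho^{\gamma}$ by a bounded cutoff such as $\min(\varrho^\gamma,n)$, which gives the modified equation \eqref{Bmods}. Because $|\varrho^{2+\gamma}/(p^0q^0)|$ is still singular at $p=0$ or $q=0$, we work in the space
\begin{align*}
X=L^1\cap L^1_{-1}\cap L^\infty_w .
\end{align*}
For fixed $n$, the truncated collision operator is locally Lipschitz on bounded sets of $X$, so the Picard iteration (or a Banach fixed point on a short interval) gives a local solution $f_n$. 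Non-negativity follows from the standard splitting
\begin{align*}
\partial_t f+f\,\nu(f)=Q^+(f,f), \qquad \nu(f)(p)=\int\!\!\int \frac{\varrho^{2+\gamma}}{p^0q^0}f(q)\,d\omega\,d^3q,
\end{align*}
written in mild (exponential integrating factor) form.

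The key step is an a priori estimate that is uniform in $n$ for the quantity
\begin{align*}
Y(t)=\|f\|_{L^1_{-2}}+\|f\|_{L^\infty_w}.
\end{align*}
We use $\varrho^2=2p^0q^0(1-\cos\theta)\le 4p^0q^0$. Since $\gamma<0$, we have $2+\gamma>0$, so
\begin{align*}
\frac{\varrho^{2+\gamma}}{p^0q^0}\le C (p^0q^0)^{\gamma/2}\,(1-\cos\theta)^{(2+\gamma)/2}.
\end{align*}
Hence the loss term obeys $\nu(f)(p)\lesssim (p^0)^{\gamma/2}\int f(q)(q^0)^{\gamma/2}d^3q$. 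Because $-1<\gamma/2<0$, this is controlled by $\|f\|_{L^1}+\|f\|_{L^1_{-1}}$. The loss term can be dropped in upper bounds anyway, since it is non-negative.

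For the gain term in $L^\infty_w$, energy conservation \eqref{energy} and the weight satisfy
\begin{align*}
e^{p^0}\le e^{p'^0+q'^0}, \qquad p'^0\le n^0,
\end{align*}
together with a pointwise bound $w(p)f(p')f(q')\le \|f\|_{L^\infty_w}^2\, e^{p^0-p'^0-q'^0}\,\frac{p^0}{p'^0 q'^0}$. We then integrate in $\omega$ and $q$, using \eqref{p'^0}--\eqref{q'^0}: $p'^0$ is an affine function of $\cos$ of the angle between $n$ and $\omega$. We expect this to give
\begin{align*}
\|Q^+(f,f)\|_{L^\infty_w}\le C\,\|f\|_{L^\infty_w}\big(\|f\|_{L^\infty_w}+\|f\|_{L^1_{-2}}\big).
\end{align*}
For the $L^1_{-2}$ moment, we use the change of variables \eqref{dpdq} to move the weight onto post-collision variables. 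This gives
\begin{align*}
\frac{d}{dt}\|f\|_{L^1_{-2}}\le \int\!\!\int\!\!\int \frac{\varrho^{2+\gamma}}{p^0q^0}f(p)f(q)\,(p'^0)^{-2}\,d\omega\,d^3p\,d^3q .
\end{align*}
We first estimate $\int (p'^0)^{-2}d\omega$. Since $p'^0\ge (n^0-|n|)/2=\varrho^2/(2(n^0+|n|))$, the $\omega$-integral yields a factor like $(n^0)^{-1}\cdot(n^0-|n|)^{-1}$ up to logarithms. This is then absorbed by $\varrho^{2+\gamma}$, where the condition $\gamma>-2$ is used. The result is an inequality $Y'\le C(Y+Y^2)$ with $C$ independent of $n$, which gives a uniform existence time $T$.

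\textbf{Main obstacle.} We expect the main difficulty to be the pointwise $\omega$-integration bounds near the degenerate set $p'^0\approx 0$, that is, $\omega$ nearly antiparallel to $n$, in both the $L^\infty_w$ and the $L^1_{-2}$ estimates. If the first attempt fails, we would exploit the explicit one-dimensional integral
\begin{align*}
\int_{\bbs^2} (n^0+|n|\cos\phi)^{-a}\,d\omega=\frac{2\pi}{|n|}\int_{n^0-|n|}^{n^0+|n|} u^{-a}\,du .
\end{align*}
Once the uniform bounds hold, the limit $n\to\infty$ follows from a Cauchy argument in $L^1\cap L^1_{-1}$. The difference of two solutions is estimated using the uniform $L^\infty_w$ and $L^1_{-2}$ bounds, which control the singular factors, and the truncation error tends to zero because it is supported on $\{\varrho^\gamma>n\}=\{\varrho<n^{1/\gamma}\}$. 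The same difference estimate gives uniqueness. The $C^1$ regularity in time follows from the equation, since the collision operator is continuous on $X$ with the uniform bounds.
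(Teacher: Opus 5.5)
Your plan is essentially the paper's proof: cut off the kernel near $\varrho=0$, derive $n$-uniform bounds in $L^\infty_w$ and $L^1_{-2}$ on a short interval, then pass to the limit by a Cauchy argument in $L^1\cap L^1_{-1}$, which also gives uniqueness. The paper's version is somewhat sharper. Because $e^{p^0-p'^0-q'^0}=e^{-q^0}$ and $\int_{\bbs^2}(p'^0q'^0)^{-1}\,d\omega\le C\varrho^{-(2+\gamma)}(n^0)^{\gamma}$, its $L^\infty_w$ estimate closes on its own; and after $\int_{\bbs^2}(p'^0)^{-2}\,d\omega=16\pi/\varrho^2$, the remaining factor $\varrho^{\gamma}$ is not absorbed but stays singular for $p\parallel q$, and is controlled by that pointwise bound together with the integrability of $\sin^{\gamma}(\phi/2)$, which is exactly where $\gamma>-2$ is used.
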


\begin{remark}
In \cite{Lee21}, massless particles were considered in a fixed homogeneous and isotropic cosmological background with a scale factor $ R \sim t^{ \frac12 } $. The main argument was to use the monotonicity and integrability of $R^{-3-\gamma}$, which led to global existence for $ - 1 < \gamma < 2 $. However, this argument is no longer valid in the Minkowski case, which would correspond to $R=1$. For hard interactions, we instead use a Povzner-type inequality to obtain global existence for $ 0 \leq \gamma < 1 $. For soft interactions, we only obtain local existence, but for a wider range of collision cross-sections: $ - 2 < \gamma < 0 $.
\end{remark}

\setcounter{thm}{0}

\section{Basic estimates}\label{sec BE}

We collect basic estimates. Lemma \ref{lem h} gives a different expression for the relative momentum of massless particles. Lemma \ref{lem p'} is a Povzner-type inequality for massless particles. This will be used to obtain global existence for hard interactions. Such a Povzner-type inequality is unavailable for soft interactions, but the estimates in Lemma \ref{lem 1/p'} will be used to control the singularity of the relative momentum.

\begin{lemma}\label{lem h}
The relative momentum $ \varrho $ is a collisional invariant and can be written as
\begin{align}
\varrho = \sqrt{ ( n^0 )^2 - | n |^2 } = 2 \sqrt{ p^0 q^0 } \sin \frac{ \phi }{ 2 } , \label{l1}
\end{align}
where $ \phi $ is the angle between the three-dimensional vectors $ p $ and $ q $.
\end{lemma}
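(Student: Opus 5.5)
The plan is to establish the three assertions of the lemma one after another, each by direct computation from the definitions \eqref{p^0}, \eqref{varrho} and the parametrization \eqref{p'^0}--\eqref{q'}.

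\emph{First equality.} Using $ p^0 = | p | $ and $ q^0 = | q | $, I expand
\begin{align*}
( n^0 )^2 - | n |^2 = ( p^0 )^2 + 2 p^0 q^0 + ( q^0 )^2 - | p |^2 - 2 p \cdot q - | q |^2 = 2 ( p^0 q^0 - p \cdot q ) = \varrho^2 .
\end{align*}
Since $ \varrho \geq 0 $, this gives $ \varrho = \sqrt{ ( n^0 )^2 - | n |^2 } $.

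\emph{Second equality.} With $ p \cdot q = | p | | q | \cos \phi = p^0 q^0 \cos \phi $, I get
\begin{align*}
\varrho^2 = 2 p^0 q^0 ( 1 - \cos \phi ) = 4 p^0 q^0 \sin^2 \frac{ \phi }{ 2 } .
\end{align*}
Because $ \phi \in [ 0 , \pi ] $, $ \sin ( \phi / 2 ) \geq 0 $, so taking square roots is legitimate. If $ p = 0 $ or $ q = 0 $, then $ \phi $ is undefined, but both sides vanish, so the formula holds with any choice of $ \phi $.

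\emph{Collisional invariance.} By the first equality it is enough to show that $ n'^0 = n^0 $ and $ n' = n $, where $ n' = p' + q' $. Adding \eqref{p'^0} and \eqref{q'^0} gives the energy statement \eqref{energy}. Adding \eqref{p'} and \eqref{q'}, the $ \omega $-dependent terms cancel, so $ n' = n $. Then
\begin{align*}
\varrho' = \sqrt{ ( n'^0 )^2 - | n' |^2 } = \varrho .
\end{align*}
For this last step to be valid, the first equality must be applied to the primed momenta, which requires them to lie on the light cone, that is, $ p'^0 = | p' | $ and $ q'^0 = | q' | $. This is the only step that is not purely formal, and I expect it to be the main obstacle.

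To check it, I write $ e = \omega + \frac{ ( n \cdot \omega ) n }{ \varrho ( n^0 + \varrho ) } $ and expand
\begin{align*}
| p' |^2 = \frac{ | n |^2 }{ 4 } + \frac{ \varrho }{ 2 } \, n \cdot e + \frac{ \varrho^2 }{ 4 } | e |^2 .
\end{align*}
Using $ | n |^2 = ( n^0 )^2 - \varrho^2 = ( n^0 - \varrho ) ( n^0 + \varrho ) $, I compute
\begin{align*}
n \cdot e = ( n \cdot \omega ) \frac{ n^0 }{ \varrho } , \qquad \varrho^2 | e |^2 = \varrho^2 + ( n \cdot \omega )^2 .
\end{align*}
Substituting these gives
\begin{align*}
| p' |^2 = \frac{ ( n^0 )^2 }{ 4 } + \frac{ n^0 ( n \cdot \omega ) }{ 2 } + \frac{ ( n \cdot \omega )^2 }{ 4 } = ( p'^0 )^2 .
\end{align*}
The same computation with the sign of $ e $ reversed gives $ | q' | = q'^0 $. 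The nonnegativity $ p'^0 , q'^0 \geq 0 $ follows from $ | n \cdot \omega | \leq | n | \leq n^0 $.

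This verifies that the primed momenta are massless, so the first equality applies to them and $ \varrho $ is a collisional invariant. It is also consistent with the remark that the parametrization \eqref{p'^0}--\eqref{q'} is derived from \cite{SY}.
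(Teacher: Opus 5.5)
Your proposal is correct, and it supplies an argument the paper does not give: the paper's proof only refers to \cite{LNT1}. The two equalities follow, as you say, by expanding $(n^0)^2-|n|^2$ using $p^0=|p|$ and $q^0=|q|$. The invariance follows from $n'=n$ and $n'^0=n^0$, provided the primed momenta lie on the light cone.

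You correctly single out that last condition as the one non-formal step. The paper takes it for granted from the parametrization \eqref{p'^0}--\eqref{q'} borrowed from \cite{SY}. Your check that $|p'|=p'^0$ and $|q'|=q'^0$ is exactly what makes that parametrization a legitimate massless collision map. Your identities $n\cdot e=(n\cdot\omega)n^0/\varrho$ and $\varrho^2|e|^2=\varrho^2+(n\cdot\omega)^2$ are right.

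One cosmetic point: $e$ itself is undefined when $\varrho=0$, for instance when $p$ and $q$ point in the same direction. The momenta $p'$ and $q'$ are still well defined there, since $\frac{\varrho}{2}e=\frac{\varrho}{2}\omega+\frac{(n\cdot\omega)n}{2(n^0+\varrho)}$. If you write the computation in terms of $\varrho e$, namely $n\cdot(\varrho e)=(n\cdot\omega)n^0$ and $|\varrho e|^2=\varrho^2+(n\cdot\omega)^2$, it covers that case with no division by $\varrho$. Alternatively, argue by continuity. Either way, only the trivial case $p=q=0$ is left.
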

\begin{proof}
We refer to \cite{LNT1} for the proof.
\end{proof}

\begin{lemma}\label{lem p'}
Let $ p'^0 $ and $ q'^0 $ be the post-collision momenta for given $ p , q \in \bbr^3 $ and $ \omega \in \bbs^2 $. Then, we have for $ r \geq 1 $,
\begin{align}
( p'^0 )^r + ( q'^0 )^r - ( p^0 )^r - ( q^0 )^r \leq C ( ( p^0 )^{ r - 1 } q^0 + p^0 ( q^0 )^{ r - 1 } ) ,\label{l2}
\end{align}
for some $ C > 0 $ which depends on $r$.
Moreover, there exists $ d_r > 0 $ such that if $ \omega $ is restricted to $ | n \cdot \omega | \leq d_r | n | $, then we have
\begin{align}
\nonumber & ( p'^0 )^r + ( q'^0 )^r - ( p^0 )^r - ( q^0 )^r \\
& \leq C ( ( p^0 )^{ r - 1 } q^0 + p^0 ( q^0 )^{ r - 1 } ) - \left( 1 - \frac{ 2 ( 1 + d_r )^r }{ 2^r } \right) ( ( p^0 )^r + ( q^0 )^r ) , \label{l3}
\end{align}
where $ 1 - 2 ( 1 + d_r )^r / 2^r > 0 $.
\end{lemma}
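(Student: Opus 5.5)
The key structure is that energy conservation \eqref{energy} gives $p'^0+q'^0=p^0+q^0=n^0$, with $p'^0=\tfrac12(n^0+n\cdot\omega)$ and $q'^0=\tfrac12(n^0-n\cdot\omega)$ by \eqref{p'^0}--\eqref{q'^0}. The whole lemma is therefore an elementary inequality about splitting the fixed total $n^0$ into two nonnegative parts. I will treat \eqref{l2} and \eqref{l3} separately.

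For \eqref{l2}, I will use that for $a,b\ge0$ with $a+b=n^0$ and $r\ge1$, convexity of $x\mapsto x^r$ gives $a^r+b^r\le (a+b)^r=(p^0+q^0)^r$. The problem then reduces to the estimate
\[
(p^0+q^0)^r-(p^0)^r-(q^0)^r\le C\bigl((p^0)^{r-1}q^0+p^0(q^0)^{r-1}\bigr).
\]
This is homogeneous of degree $r$, so I set $x=p^0/(p^0+q^0)\in[0,1]$ and need $1-x^r-(1-x)^r\le C(x^{r-1}(1-x)+x(1-x)^{r-1})$. I will show it with a case split. If $x\le1/2$, the mean value theorem gives $1-(1-x)^r\le r x$, and the right side is at least $x(1-x)^{r-1}\ge 2^{1-r}x$. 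The case $x\ge1/2$ is symmetric. This yields $C=r2^{r-1}$. If one prefers, the standard Povzner bound $(a+b)^r-a^r-b^r\le C(a^{r-1}b+ab^{r-1})$ can simply be quoted.

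For \eqref{l3}, restricting to $|n\cdot\omega|\le d_r|n|$ and using $|n|\le n^0$ gives $p'^0,q'^0\le \tfrac12(1+d_r)n^0$. Hence
\[
(p'^0)^r+(q'^0)^r\le \frac{2(1+d_r)^r}{2^r}(p^0+q^0)^r.
\]
I then write $(p^0+q^0)^r=(p^0)^r+(q^0)^r+\bigl[(p^0+q^0)^r-(p^0)^r-(q^0)^r\bigr]$ and bound the bracket by the previous step. This gives
\[
(p'^0)^r+(q'^0)^r-(p^0)^r-(q^0)^r\le -\Bigl(1-\tfrac{2(1+d_r)^r}{2^r}\Bigr)\bigl((p^0)^r+(q^0)^r\bigr)+\tfrac{2(1+d_r)^r}{2^r}C\bigl((p^0)^{r-1}q^0+p^0(q^0)^{r-1}\bigr),
\]
and the second coefficient is absorbed into a new constant $C$.

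It remains to choose $d_r$ so that $2(1+d_r)^r<2^r$, i.e. $d_r<2^{(r-1)/r}-1$. This requires $r>1$. At $r=1$ there is no such $d_r>0$, but there the left side of \eqref{l2} is identically zero and the claim is degenerate. I expect this edge case to be the only real subtlety: I will state the second part for $r>1$ and note that the $r=1$ case is trivial by energy conservation.
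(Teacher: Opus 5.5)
Your proposal is correct and is essentially the paper's proof. The second part is identical. In the first part, your superadditivity step $(p'^0)^r+(q'^0)^r\le(n^0)^r$ replaces the paper's detour through the extremal choice $\omega\parallel n$ and the bound $(n^0-|n|)/2\le\min(p^0,q^0)$. From there both arguments rest on the elementary estimate $(a+b)^r-a^r-b^r\le C(a^{r-1}b+ab^{r-1})$, which the paper uses without proof and you prove.

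Your caveat at $r=1$ is also correct. There $1-2(1+d)/2=-d<0$ for every $d>0$, so the paper's closing step ``choose a small $d_r$'' tacitly requires $r>1$. This is harmless, since \eqref{l3} is never used later in the paper.
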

\begin{proof}
The proof is almost the same as the one in \cite{LR}. We suppose that $ p $ and $ q $ are given and notice that the left hand side of \eqref{l2} attains its maximum when $ \omega $ is parallel to $ n $. Hence, we have
\begin{align*}
( p'^0 )^r + ( q'^0 )^r - ( p^0 )^r - ( q^0 )^r & = \left( \frac{ n^0 }{ 2 } + \frac{ n \cdot \omega }{ 2 } \right)^r + \left( \frac{ n^0 }{ 2 } - \frac{ n \cdot \omega }{ 2 } \right)^r - ( p^0 )^r - ( q^0 )^r \\
& \leq \left( \frac{ n^0 + | n | }{ 2 } \right)^r + \left( \frac{ n^0 - | n | }{ 2 } \right)^r - ( p^0 )^r - ( q^0 )^r \\
& \leq \left( \frac{ n^0 + | n | }{ 2 } \right)^r - \max \Big( ( p^0 )^r , ( q^0 )^r \Big) ,
\end{align*}
where we used the fact that $ ( n^0 - | n | ) / 2 $ is smaller than both $ p^0 $ and $ q^0 $. Since $ ( n^0 + | n | ) / 2 \leq n^0 $, we have
\begin{align*}
( p'^0 )^r + ( q'^0 )^r - ( p^0 )^r - ( q^0 )^r & \leq ( p^0 + q^0 )^r - \max \Big( ( p^0 )^r , ( q^0 )^r \Big) \\
& \leq ( p^0 )^r + ( q^0 )^r + C_r ( ( p^0 )^{ r - 1 } q^0 + p^0 ( q^0 )^{ r - 1 } ) - \max \Big( ( p^0 )^r , ( q^0 )^r \Big) \\
& \leq C_r ( ( p^0 )^{ r - 1 } q^0 + p^0 ( q^0 )^{ r - 1 } ) ,
\end{align*}
for some $ C_r > 0 $. For the second result we notice that
\begin{align*}
p'^0 & \leq \frac{ n^0 }{ 2 } + \frac{ | n \cdot \omega | }{ 2 } \leq \frac{ n^0 }{ 2 } + \frac{ d_r | n | }{ 2 } \leq \frac{ ( 1 + d_r ) n^0 }{ 2 } ,
\end{align*}
and have the same estimate for $ q'^0 $. Now, we obtain
\begin{align*}
& ( p'^0 )^r + ( q'^0 )^r - ( p^0 )^r - ( q^0 )^r \\
& \leq \frac{ 2 ( 1 + d_r )^r ( n^0 )^r }{ 2^r } - ( p^0 )^r - ( q^0 )^r \\
& \leq \frac{ 2 ( 1 + d_r )^r }{ 2^r } ( ( p^0 )^r + ( q^0 )^r + C_r ( ( p^0 )^{ r - 1 } q^0 + p^0 ( q^0 )^{ r - 1 } ) ) - ( p^0 )^r - ( q^0 )^r \\
& \leq C_r ( ( p^0 )^{ r - 1 } q^0 + p^0 ( q^0 )^{ r - 1 } ) - \left( 1 - \frac{ 2 ( 1 + d_r )^r }{ 2^r } \right) ( ( p^0 )^r + ( q^0 )^r ) .
\end{align*}
Now, we can choose a small $ d_r $ so that the factor $ ( 1 - 2 ( 1 + d_r )^r / 2^r ) $ is strictly positive.
\end{proof}

In the case of $ r= 2 $ one can obtain the result of Lemma \ref{lem p'} by direct calculation:
\begin{align}
( p'^0 )^2 + ( q'^0 )^2 - ( p^0 )^2 - ( q^0 )^2 \leq 2 p^0 q^0 ,
\end{align}
as in \cite{LNT1}.

\begin{lemma}\label{lem 1/p'}
The post-collision momenta satisfy for any $ a > 0 $ the following:
\begin{align*}
& \int_{ \bbs^2 } \frac{ 1 }{ p'^0 } \, d \omega = \int_{ \bbs^2 } \frac{ 1 }{ q'^0 } \, d \omega \leq \frac{ C }{ \varrho^a ( n^0 )^{ 1 - a } } , \\
& \int_{ \bbs^2 } \frac{ 1 }{ ( p'^0 )^2 } \, d \omega = \int_{ \bbs^2 } \frac{ 1 }{ ( q'^0 )^2 } \, d \omega = \frac{ 16 \pi }{ \varrho^2 } , \\
& \int_{ \bbs^2 } \frac{ 1 }{ p'^0 q'^0 } \, d \omega \leq \frac{ C }{ \varrho^a ( n^0 )^{ 2 - a } } ,
\end{align*}
where $ C > 0 $ depends on $ a $.
\end{lemma}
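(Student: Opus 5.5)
The plan is to compute all three integrals explicitly. The integrands depend on $\omega$ only through $n\cdot\omega$, by \eqref{p'^0} and \eqref{q'^0}. Choose polar coordinates on $\bbs^2$ with axis along $n$ (any axis if $n=0$) and write $x=\cos\theta$. Then for any function $g$,
\begin{align*}
\int_{\bbs^2} g(n\cdot\omega)\,d\omega = 2\pi\int_{-1}^{1} g(|n|x)\,dx .
\end{align*}
The map $\omega\mapsto-\omega$ exchanges $p'^0$ and $q'^0$ and preserves $d\omega$, which gives the equalities between the $p'^0$ and $q'^0$ integrals. Two facts from Lemma \ref{lem h} will be used throughout: $(n^0)^2-|n|^2=\varrho^2$, and $\varrho\le n^0$.

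For the second identity, substitute $p'^0=(n^0+|n|x)/2$ and integrate:
\begin{align*}
2\pi\int_{-1}^1\frac{4\,dx}{(n^0+|n|x)^2}=\frac{8\pi}{|n|}\Big(\frac{1}{n^0-|n|}-\frac{1}{n^0+|n|}\Big)=\frac{16\pi}{(n^0)^2-|n|^2}=\frac{16\pi}{\varrho^2}.
\end{align*}
The case $n=0$ follows directly or by continuity.

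For the first estimate, the same computation gives
\begin{align*}
\int_{\bbs^2}\frac{d\omega}{p'^0}=\frac{4\pi}{|n|}\log\frac{n^0+|n|}{n^0-|n|}.
\end{align*}
Since $n^0-|n|=\varrho^2/(n^0+|n|)$, the argument of the logarithm equals $(n^0+|n|)^2/\varrho^2\le(2n^0/\varrho)^2$. We then split into two cases.
\begin{itemize}
\item If $|n|\ge n^0/2$, the integral is at most $\frac{16\pi}{n^0}\log(2n^0/\varrho)$. Since $2n^0/\varrho\ge 2$, the elementary bound $\log y\le C_a y^a$ for $y\ge1$ yields $C\varrho^{-a}(n^0)^{a-1}$.
\item If $|n|<n^0/2$, we bound pointwise: $1/p'^0\le 2/(n^0-|n|)\le 4/n^0$. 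This is at most $4(n^0/\varrho)^a/n^0$ because $\varrho\le n^0$.
\end{itemize}
This case split, which avoids the logarithmic blow-up of the $1/|n|$ prefactor as $|n|\to0$, is the only mildly delicate point.

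For the third estimate, energy conservation $p'^0+q'^0=n^0$ gives the partial fraction identity
\begin{align*}
\frac{1}{p'^0q'^0}=\frac{1}{n^0}\Big(\frac{1}{p'^0}+\frac{1}{q'^0}\Big).
\end{align*}
Integrating over $\bbs^2$ and applying the first estimate to both terms gives $C\varrho^{-a}(n^0)^{a-2}$. This completes the argument, with $C$ depending only on $a$.
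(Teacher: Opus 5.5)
Your proof is correct and complete. The reduction to $2\pi\int_{-1}^{1}g(|n|x)\,dx$ checks out, as does the exact evaluation giving $16\pi/\varrho^2$ and the logarithmic formula for $\int_{\bbs^2}(p'^0)^{-1}\,d\omega$. The case split $|n|\gtrless n^0/2$ correctly handles the $1/|n|$ prefactor. The partial-fraction identity $\frac{1}{p'^0q'^0}=\frac{1}{n^0}\bigl(\frac{1}{p'^0}+\frac{1}{q'^0}\bigr)$ also checks out.

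The paper gives no argument of its own for this lemma and simply refers to \cite{Lee21, LNT1}, so there is nothing to compare step by step. What your version buys is self-containment and an explicit constant. Since $\log y\le \frac{1}{ae}y^a$, one may take $C=16\pi\max\bigl(1,\,2^a/(ae)\bigr)$ in the first bound and $2C$ in the third. You also show that the third estimate is an immediate corollary of the first via $p'^0+q'^0=n^0$, rather than requiring a separate computation.
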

\begin{proof}
We refer to \cite{Lee21, LNT1} for the proof.
\end{proof}

\section{Proof of the main results}\label{sec PM}
In this paper we consider collision cross-sections of the type:
\begin{align*}
\sigma = \varrho^\gamma , \qquad - 2 < \gamma \leq 1 .
\end{align*}
They are not bounded for both hard and soft interactions. We follow the standard approach to cut off the collision cross-sections. For hard interactions, we consider
\begin{align}
\frac{ \partial f }{ \partial t } = Q^{\mathrm{hard}}_n ( f , f ) , \label{Bmod}
\end{align}
where $ Q_n^{\mathrm{hard}} $ is the collision operator with cut-off defined by 
\begin{align}
Q_n^{\mathrm{hard}} ( f , f ) = \iint_{ \varrho \leq n } \frac{ \varrho^{ 2 + \gamma } }{ p^0 q^0 } ( f ( p' ) f ( q' ) - f ( p ) f ( q ) ) \, d \omega \, d^3 q , \qquad 0 \leq \gamma \leq 1 . \label{Cmod}
\end{align} 
For soft interactions, we consider
\begin{align}
\frac{ \partial f }{ \partial t } = Q^{\mathrm{soft}}_n ( f , f ) , \label{Bmods}
\end{align}
with cut-off defined by
\begin{align}
Q_n^{\mathrm{soft}} ( f , f ) = \iint_{ \varrho \geq n^{ - 1 } } \frac{ \varrho^{ 2 + \gamma } }{ p^0 q^0 } ( f ( p' ) f ( q' ) - f ( p ) f ( q ) ) \, d \omega \, d^3 q , \qquad - 2 < \gamma < 0 . \label{Cmods}
\end{align} 
The collision kernels are now bounded, so that the modified equations \eqref{Bmod} with \eqref{Cmod} and \eqref{Bmods} with \eqref{Cmods} admit global solutions in $ L^1 $ by standard arguments (see, e.g., \cite{ark1, ark2, MW}). Below, in Sections \ref{sec ECO} and \ref{sec E}, we consider $ L^1_r $ with $ r > 0 $ to obtain global existence for hard interactions. In Sections \ref{sec ECOs} and \ref{sec Es}, we consider $ L^1_r  \cap L^\infty_w $ with $ r < 0 $ for soft interactions, and will obtain local existence.

\subsection{Existence for hard interactions with cut-off}\label{sec ECO}

The modified equation \eqref{Bmod} with \eqref{Cmod} has global, non-negative solutions in $ L^1 $. Let $ f_n $ be the solution of \eqref{Bmod} with \eqref{Cmod}. We need to show that the $ f_n $ are uniformly bounded with respect to $ n $. It is clear that the $ f_n $ are uniformly bounded in $ L^1 $ and $ L^1_1 $ as in \eqref{conserved}. We consider the space $ L^1_r $ with $ r \geq 1 $. We first multiply equation \eqref{Bmod} by $ ( p^0 )^{ 1 + \frac{ \gamma }{ 2 } } $ to obtain
\begin{align*}
& \frac{ d }{ d t } \int f_n ( p ) ( p^0 )^{ 1 + \frac{ \gamma }{ 2 } } \, d^3 p \\
& = \frac12 \iiint_{ \varrho \leq n } \frac{ \varrho^{ 2 + \gamma } }{ p^0 q^0 } f_n ( p ) f_n ( q ) ( ( p'^0 )^{ 1 + \frac{ \gamma }{ 2 } } + ( q'^0 )^{ 1 + \frac{ \gamma }{ 2 } } - ( p^0 )^{ 1 + \frac{ \gamma }{ 2 } } - ( q^0 )^{ 1 + \frac{ \gamma }{ 2 } } ) \, d \omega \, d^3 q \, d^3 p .
\end{align*}
Applying Lemma \ref{lem p'}, we obtain
\begin{align*}
\frac{ d }{ d t } \int f_n ( p ) ( p^0 )^{ 1 + \frac{ \gamma }{ 2 } } \, d^3 p & \leq C \iiint_{ \varrho \leq n } \frac{ \varrho^{ 2 + \gamma } }{ p^0 q^0 } f_n ( p ) f_n ( q ) ( p^0 )^{ \frac{ \gamma }{ 2 } } q^0 \, d \omega \, d^3 q \, d^3 p \\
& \leq C \iint_{ \bbr^6 } f_n ( p ) f_n ( q ) ( p^0 )^\gamma ( q^0 )^{ 1 + \frac{ \gamma }{ 2 } } \, d^3 q \, d^3 p \\
& \leq C \| f_n \|_{ L^1_\gamma } \| f_n \|_{ L^1_{ 1 + \frac{ \gamma }{ 2 } } } .
\end{align*}
Since $ 0 \leq \gamma \leq 1 $, the quantity $ \| f_n \|_{ L^1_\gamma } $ is bounded. Hence, we obtain
\begin{align*}
\frac{ d }{ d t } \| f_n \|_{ L^1_{ 1 + \frac{ \gamma }{ 2 } } } \leq C \| f_n \|_{ L^1_{ 1 + \frac{ \gamma }{ 2 } } } ,
\end{align*}
which shows that the $ \| f_n \|_{ L^1_{ 1 + \frac{ \gamma }{ 2 } } } $ are uniformly bounded on any interval $ [ 0 , T ] $. Now, we multiply \eqref{Bmod} by $ ( p^0 )^r $ for $ r \geq 1 $ to obtain
\begin{align*}
\frac{ d }{ d t } \int f_n ( p ) ( p^0 )^r \, d^3 p & \leq C \iiint_{ \varrho \leq n } \frac{ \varrho^{ 2 + \gamma } }{ p^0 q^0 } f_n ( p ) f_n ( q ) ( p^0 )^{ r - 1 } q^0 \, d \omega \, d^3 q \, d^3 p \\
& \leq C \iint_{ \bbr^6 } f_n ( p ) f_n ( q ) ( p^0 )^{ r - 1 + \frac{ \gamma }{ 2 } } ( q^0 )^{ 1 + \frac{ \gamma }{ 2 } } \, d^3 q \, d^3 p \\
& \leq C \| f_n \|_{ L^1_{ r - 1 + \frac{ \gamma }{ 2 } } } \| f_n \|_{ L^1_{ 1 + \frac{ \gamma }{ 2 } } } .
\end{align*}
Since $ 0 \leq r - 1 + \frac{ \gamma }{ 2 } \leq r $, we conclude that the $ \| f_n \|_{ L^1_r } $ are uniformly bounded on any interval $ [ 0 , T ] $. Hence, we obtain the following lemma:

\begin{lemma}\label{lem f_n}
For any initial datum $ 0 \leq f_0 \in L^1 ( \bbr^3 ) $, equation \eqref{Bmod} for hard interactions with cut-off \eqref{Cmod} has a unique non-negative solution $ f_n \in C^1 ( [ 0 , \infty ) ; L^1 ( \bbr^3 ) ) $. If, in addition, $ f_0 \in L^1_r ( \bbr^3 ) $ with $ r \geq 0 $, then for any $ T > 0 $,
\begin{align*}
\sup_{ n \in \bbn } \sup_{ t \in [ 0 , T ] } \| f_n ( t ) \|_{ L^1_r } \leq C ,
\end{align*}
where $ C > 0 $ depends on $ T $ and $r$.
\end{lemma}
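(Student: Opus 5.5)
The plan has two parts. First, existence and uniqueness of $f_n$ for fixed $n$ in $L^1$ come from the boundedness of the cut-off kernel. Second, uniform moment bounds follow from the computation carried out just before the statement.

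\emph{Step 1: the cut-off problem.} On $\{\varrho\le n\}$, Lemma \ref{lem h} gives $\varrho^2 = 4p^0q^0\sin^2(\phi/2)\le 4p^0q^0$. Hence
\[
\frac{\varrho^{2+\gamma}}{p^0q^0}\le 4n^\gamma ,
\]
so the kernel of $Q_n^{\mathrm{hard}}$ is bounded, uniformly in $p,q,\omega$ for fixed $n$. Using \eqref{dpdq}, both the gain and the loss terms are bounded bilinear maps on $L^1$, with norm at most $C_n\|f\|_{L^1}\|g\|_{L^1}$. This gives a unique local $C^1$ solution by Picard iteration in $L^1$.

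\emph{Step 2: non-negativity and globality.} I would write the equation as
\[
\partial_t f + f\,L_n(f) = Q^+_n(f,f), \qquad L_n(f)(p)=\iint_{\varrho\le n}\frac{\varrho^{2+\gamma}}{p^0q^0}f(q)\,d\omega\,d^3q ,
\]
and solve it through the exponential-integrating-factor iteration of the standard Arkeryd / Mischler--Wennberg arguments. This iteration preserves positivity. Once $f_n\ge0$, conservation of $\int f_n\,d^3p$ (the first identity in \eqref{conserved}, which holds for the cut-off operator because the cut-off set is collision invariant by Lemma \ref{lem h}) gives an a priori $L^1$ bound. That bound extends the local solution to all of $[0,\infty)$.

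\emph{Step 3: uniform moments.} For $0\le r\le1$ the bound is immediate: $(p^0)^r\le 1+p^0$, and both $\|f_n\|_{L^1}$ and $\|f_n\|_{L^1_1}$ are conserved and independent of $n$. For $r>1$ I would justify the weak formulation with weight $(p^0)^r$. To do this, first truncate the weight by $\min((p^0)^r,R)$ and let $R\to\infty$, or show that $L^1_r$ is propagated for fixed $n$ using the bounded kernel. Then apply the computation preceding the lemma in two stages.
\begin{itemize}
\item[(a)] Lemma \ref{lem p'} with exponent $1+\gamma/2\ge1$, together with $\varrho^{2+\gamma}\le (4p^0q^0)^{1+\gamma/2}$, gives a linear Gronwall inequality for $\|f_n\|_{L^1_{1+\gamma/2}}$. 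The coefficient is $\|f_n\|_{L^1_\gamma}$, which is bounded since $\gamma\le1$.
\item[(b)] The same argument at exponent $r$ gives
\[
\frac{d}{dt}\|f_n\|_{L^1_r}\le C\|f_n\|_{L^1_{r-1+\gamma/2}}\|f_n\|_{L^1_{1+\gamma/2}} .
\]
Since $r-1+\gamma/2\le r$, interpolation gives $\|f_n\|_{L^1_{r-1+\gamma/2}}\le \|f_n\|_{L^1}+\|f_n\|_{L^1_r}$. Gronwall then closes with constants depending only on $T$, $r$ and the initial norms.
\end{itemize}

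\emph{Main obstacle.} I expect the main difficulty to be the rigorous justification of multiplying by the unbounded weight $(p^0)^r$ for $r>1$, that is, showing that $f_n(t)$ actually stays in $L^1_r$ for each fixed $n$. I would handle this by the truncated-weight argument. For fixed $n$ one can also use that $\varrho\le n$ bounds the collision rate, giving a crude $n$-dependent bound $\frac{d}{dt}\|f_n\|_{L^1_r}\le C_n\|f_n\|_{L^1_r}$ that confirms finiteness. After that, the uniform estimate from Step 3 applies.
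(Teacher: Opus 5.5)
Your plan is the paper's argument: a bounded cut-off kernel with the cited $L^1$ theory, conservation of mass and energy, then the two-stage Povzner--Gronwall estimate via Lemma \ref{lem p'}. You add more care about positivity and about justifying the weight $(p^0)^r$ for fixed $n$, and that part is fine.

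\textbf{The gap: stage (a) for $1<r<1+\frac{\gamma}{2}$.} The paper's computation has the same gap. Stage (a) is a linear Gronwall inequality for $\|f_n\|_{L^1_{1+\gamma/2}}$, so it gives a bound only if $\|f_0\|_{L^1_{1+\gamma/2}}<\infty$. When $\gamma>0$ and $1<r<1+\frac{\gamma}{2}$, this does not follow from $f_0\in L^1\cap L^1_r$. The factor $\|f_n\|_{L^1_{1+\gamma/2}}$ in stage (b) is then uncontrolled. Rearranging stage (b) does not help. For $r<2$ and $p^0\ll q^0$, the bound $C((p^0)^{r-1}q^0+p^0(q^0)^{r-1})$ of Lemma \ref{lem p'} exceeds the true increment by a factor $(q^0/p^0)^{2-r}$. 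Multiplied by the kernel, it really does produce $(q^0)^{1+\gamma/2}$.

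\textbf{A fix.} Keep the sharper bound that the proof of Lemma \ref{lem p'} actually yields. With $a=\max(p^0,q^0)$ and $b=\min(p^0,q^0)$,
\[
(p'^0)^r+(q'^0)^r-(p^0)^r-(q^0)^r\le (a+b)^r-a^r\le r2^{r-1}a^{r-1}b .
\]
Multiply by $\varrho^{2+\gamma}/(p^0q^0)\le 4^{1+\gamma/2}(ab)^{\gamma/2}$. Since $b^{1-\gamma/2}\le a^{1-\gamma/2}$ (here $\gamma\le1$ is used), the result is bounded by a constant times $a^{r-1+\gamma/2}b^{1+\gamma/2}\le a^r b^\gamma\le (p^0)^r(q^0)^\gamma+(p^0)^\gamma(q^0)^r$. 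This gives, in a single step and for every $r\ge1$,
\[
\frac{d}{dt}\|f_n\|_{L^1_r}\le C_r\|f_n\|_{L^1_\gamma}\|f_n\|_{L^1_r},
\]
where $\|f_n\|_{L^1_\gamma}\le\|f_0\|_{L^1}+\|f_0\|_{L^1_1}$ uniformly in $n$.

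\textbf{The case $0\le r<1$.} Your ``immediate'' bound, like the paper's, uses conservation of $\|f_n\|_{L^1_1}$. It therefore tacitly assumes $f_0\in L^1_1$, which is not part of the hypothesis when $r<1$. You should state it as an assumption.

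Neither issue affects Theorem \ref{thm hard}. There $f_0\in L^1_2$, and only moments of order at most $(3+\gamma)/2\le 2$ are used.
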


\subsection{Existence for soft interactions with cut-off}\label{sec ECOs}
As in Section \ref{sec ECO}, the modified equation \eqref{Bmods} with \eqref{Cmods} has global, non-negative solutions in $ L^1 $. In the case of soft interactions we need to show that the $ f_n $ are uniformly bounded in $ L^\infty_{ w } $ and $ L^1_{ - 2 } $ as well as in $ L^1 $. It is clear that the $ f_n $ are uniformly bounded in $ L^1 $. 

For $ L^\infty_w $, we multiply equation \eqref{Bmods} by $ w = p^0 e^{ p^0 } $ to obtain
\begin{align*}
\frac{ \partial ( w f_n ) }{ \partial t } & \leq C \| f \|^2_{ L^\infty_w } \iint_{ \varrho \geq n^{ - 1 } } \frac{ \varrho^{ 2 + \gamma } }{ p^0 q^0 } \frac{ w ( p ) }{ w ( p' ) w ( q' ) } \, d \omega \, d^3 q \\
& \leq C \| f \|^2_{ L^\infty_w } \iint \frac{ \varrho^{ 2 + \gamma } }{ q^0 } \frac{ e^{ - q^0 } }{ p'^0 q'^0 } \, d \omega \, d^3 q \\
& \leq C \| f \|^2_{ L^\infty_w } \int \frac{ \varrho^{ 2 + \gamma } }{ q^0 } \frac{ e^{ - q^0 } }{ \varrho^a ( n^0 )^{ 2 - a } } \, d^3 q ,
\end{align*}
where we used Lemma \ref{lem 1/p'}. We choose $ a = 2 + \gamma $, which is positive for any $ - 2 < \gamma < 0 $, to obtain
\begin{align*}
& \int \frac{ \varrho^{ 2 + \gamma } }{ q^0 } \frac{ e^{ - q^0 } }{ \varrho^a ( n^0 )^{ 2 - a } } \, d^3 q \leq C \int \frac{ e^{ - q^0 } }{ ( q^0 )^{ 1 + \gamma } } \, d^3 q ,
\end{align*}
which is finite for $ - 2 < \gamma < 0 $. We obtain
\begin{align*}
\frac{ d }{ d t } \| f_n \|_{ L^\infty_w } \leq C \| f_n \|_{ L^\infty_w }^2 ,
\end{align*}
which shows that $ f_n $ are bounded on a finite time interval $ [ 0 , T ] $. Next, for $ L^1_{ - 2 } $, we multiply equation \eqref{Bmods} by $ ( p^0 )^{ - 2 } $ to obtain
\begin{align*}
& \frac{ d }{ d t } \int f_n ( p ) \frac{ 1 }{ ( p^0 )^2 } \, d^3 p \\
& = \frac12 \iiint_{ \varrho \geq n^{ - 1 } } \frac{ \varrho^{ 2 + \gamma } }{ p^0 q^0 } f_n ( p ) f_n ( q ) \left( \frac{ 1 }{ ( p'^0 )^2 } + \frac{ 1 }{ ( q'^0 )^2 } - \frac{ 1 }{ ( p^0 )^2 } - \frac{ 1 }{ ( q^0 )^2 } \right) d \omega \, d^3 q \, d^3 p .
\end{align*}
Applying Lemma \ref{lem 1/p'} with the above estimate in $ L^\infty_w $, we obtain
\begin{align*}
\frac{ d }{ d t } \int f_n ( p ) \frac{ 1 }{ ( p^0 )^2 } \, d^3 p & \leq C \iint_{ \varrho \geq n^{ - 1 } } \frac{ \varrho^{ \gamma } }{ p^0 q^0 } f_n ( p ) f_n ( q ) \, d^3 q \, d^3 p \\
& \leq C \iint \frac{ \varrho^{ \gamma } }{ ( p^0 q^0 )^2 } e^{ - p^0 } e^{- q^0 } d^3 q \, d^3 p \\
& \leq C \iint \frac{ \sin^\gamma ( \frac{ \phi }{ 2 } ) }{ ( p^0 q^0 )^{ 2 - \frac{ \gamma }{ 2 } } } e^{ - p^0 } e^{- q^0 } d^3 q \, d^3 p ,
\end{align*}
where we used \eqref{l1}. The last double integration is finite, since $ - 2 < \gamma < 0 $. Hence, we conclude that the $ f_n $ are uniformly bounded in $ L^1_{ - 2 } $ on the time interval $ [ 0 , T ] $, which we obtained above. Hence, we obtain the following lemma:

\begin{lemma}\label{lem f_ns}
For any initial datum $ 0 \leq f_0 \in L^1 ( \bbr^3 ) $, the equation \eqref{Bmods} for soft interactions with cut-off \eqref{Cmods} has a unique non-negative solution $ f_n \in C^1 ( [ 0 , \infty ) ; L^1 ( \bbr^3 ) ) $. If, in addition, $ f_0 \in L^1_{ - 2 } ( \bbr^3 ) \cap L^\infty_w ( \bbr^3 ) $, then there exists a $ T > 0 $ such that
\begin{align*}
\sup_{ n \in \bbn } \sup_{ t \in [ 0 , T ] } ( \| f_n ( t ) \|_{ L^1_{ - 2 } } + \| f_n ( t ) \|_{ L^\infty_w } ) \leq C ,
\end{align*}
where $ C > 0 $ depends on $ T $.
\end{lemma}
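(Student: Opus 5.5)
Since the cut-off kernel of \eqref{Bmods} is bounded on the region $\varrho \geq n^{-1}$ only after accounting for the weights $1/(p^0 q^0)$, I would first establish global existence in $L^1$ by the standard splitting $Q_n^{\mathrm{soft}} = Q^+_n - f\,\nu_n(f)$ with a monotone iteration (as in \cite{ark1, ark2, MW}). The gain term $Q_n^+$ is nonnegative. The loss frequency $\nu_n(f)(p) = \int_{\varrho\geq n^{-1}} \varrho^{2+\gamma}(p^0q^0)^{-1} f(q)\,d\omega\,d^3q$ is controlled, since $\gamma<0$ gives $\varrho^{\gamma} \leq n^{-\gamma}$ and $\varrho^2 \leq 4p^0q^0$, so that $\nu_n(f) \leq C_n \|f\|_{L^1}$. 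Writing the equation in mild form $f(t) = e^{-\int_0^t \nu_n}f_0 + \int_0^t e^{-\int_s^t\nu_n} Q_n^+(f,f)\,ds$ then preserves nonnegativity. A contraction on short intervals whose length depends only on $\|f\|_{L^1}$, which is conserved by \eqref{conserved} (the cut-off is symmetric under \eqref{dpdq} because $\varrho$ is a collisional invariant by Lemma \ref{lem h}), extends the solution globally. Uniqueness follows from the Lipschitz bound of $Q_n^{\mathrm{soft}}$ in $L^1$ with $n$-dependent constants.

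For the uniform bounds I would run the a priori estimates preceding the statement, but rigorously and in a bootstrap. For $L^\infty_w$, I would use $e^{p^0}\leq e^{p'^0+q'^0}e^{-q^0}$ (energy conservation \eqref{energy}) to bound $w f_n$ pointwise in the mild formulation, discarding the loss term. Lemma \ref{lem 1/p'} with $a=2+\gamma$ then gives $\frac{d}{dt}\|f_n\|_{L^\infty_w}\leq C_0\|f_n\|_{L^\infty_w}^2$ with $C_0$ independent of $n$, since the cut-off only shrinks the domain. Comparison with the Riccati ODE yields $\|f_n(t)\|_{L^\infty_w}\leq 2\|f_0\|_{L^\infty_w}$ for $t\leq T:=(2C_0\|f_0\|_{L^\infty_w})^{-1}$. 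To justify differentiating a supremum, I would work with the integral inequality $w f_n(t)\leq w f_0 + C_0\int_0^t\|f_n(s)\|_{L^\infty_w}^2ds$ and apply a Gronwall/continuity argument. This also requires knowing $f_n(t)\in L^\infty_w$ for each fixed $n$, which I would obtain from the same estimate on the iteration.

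For $L^1_{-2}$, I would test against $(p^0)^{-2}$ and drop the negative loss contributions. By Lemma \ref{lem 1/p'}, $\int_{\bbs^2}(p'^0)^{-2}d\omega = 16\pi\varrho^{-2}$, which reduces the growth rate to $C\iint\varrho^\gamma (p^0q^0)^{-1}f_nf_n$. Inserting the $L^\infty_w$ bound on $[0,T]$ and the representation \eqref{l1}, I then need finiteness of $\iint \sin^{\gamma}(\phi/2)(p^0q^0)^{-2+\gamma/2}e^{-p^0-q^0}\,d^3p\,d^3q$. In spherical coordinates the radial factors become $\int_0^\infty r^{\gamma/2}e^{-r}dr<\infty$ since $\gamma>-2$, and the angular factor is $\int_0^\pi\sin^\gamma(\phi/2)\sin\phi\,d\phi<\infty$ since $\gamma>-2$. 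Integrating in time gives the uniform $L^1_{-2}$ bound on $[0,T]$.

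The main obstacle is the $L^\infty_w$ step: making the Riccati comparison rigorous uniformly in $n$. In particular, the pointwise weighted estimate must be compatible with the $L^1$ solution framework, so I would propagate $L^\infty_w$ through the iteration scheme itself rather than only a posteriori.
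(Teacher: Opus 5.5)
Your proposal is correct and follows the paper's route. The uniform $L^\infty_w$ bound comes from the same Riccati inequality, obtained via energy conservation and Lemma~\ref{lem 1/p'} with $a=2+\gamma$, and the $L^1_{-2}$ bound then uses $\int_{\bbs^2}(p'^0)^{-2}\,d\omega=16\pi\varrho^{-2}$, the $L^\infty_w$ bound and \eqref{l1}; the extra rigor you add (the mild-form construction using $\varrho^{2+\gamma}/(p^0q^0)\le 4n^{-\gamma}$ on $\{\varrho\ge n^{-1}\}$, and propagating the $L^\infty_w$ bound through the iteration) fills in what the paper only cites or does formally.
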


\subsection{Proof of Theorem \ref{thm hard}}\label{sec E}

We restate Theorem \ref{thm hard} here for the reader's convenience.

\begin{thm}\label{thm hard}
Let $ 0 \leq f_0 \in L^1 ( \bbr^3 ) \cap L^1_2 ( \bbr^3 ) $ be an initial datum of the Boltzmann equation \eqref{B} with $ 0 \leq \gamma \leq 1 $. Then, there exists a unique non-negative solution $ f \in C^1 ( [ 0 , \infty ) ; L^1 ( \bbr^3 ) \cap L^1_1 ( \bbr^3 ) ) $. If, in addition, $ f_0 \in L^1_r ( \bbr^3 ) $ with $ r \geq 1 $, then for any $ T > 0 $,
\begin{align}
\sup_{ t \in [ 0 , T ] } \| f ( t ) \|_{ L^1_r } \leq C , \label{thm hard r}
\end{align}
where $ C > 0 $ depends on $ T $ and $r$.
\end{thm}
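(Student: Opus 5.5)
The plan is a compactness-and-limit argument in $L^1$ built on the uniform moment bounds of Lemma \ref{lem f_n}, followed by a uniqueness estimate in $L^1\cap L^1_1$.

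\textbf{Step 1: the sequence $f_n$ is Cauchy.} Let $f_n$ be the solutions of \eqref{Bmod} with \eqref{Cmod} and data $f_0$. By Lemma \ref{lem f_n} with $r=2$, the $f_n$ are bounded in $L^1\cap L^1_2$ uniformly in $n$ and $t\in[0,T]$. For $m>n$ I would estimate $\frac{d}{dt}\|f_m-f_n\|_{L^1_1}$. Multiplying the difference of the equations by $\mathrm{sgn}(f_m-f_n)(1+p^0)$ and using the collision symmetries \eqref{dpdq}, the gain/loss structure splits into two parts.

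The first part is the cut-off difference. It is supported on $\varrho>n$ and is bounded by
\begin{align*}
\iint_{\varrho>n}\frac{\varrho^{2+\gamma}}{p^0q^0}f_m(p)f_m(q)\,(1+p^0+q^0)\,d^3q\,d^3p .
\end{align*}
Since $\varrho^2\le 4p^0q^0$ by \eqref{l1}, on $\{\varrho>n\}$ we have $\varrho^{2+\gamma}\le \varrho^{3}/n^{1-\gamma}$ when $\gamma<1$. In general one can use $\varrho^{2+\gamma}\le \varrho^{2+\gamma+\delta}n^{-\delta}$. This bounds the term by $Cn^{-\delta}$ times products of moments of order at most $2$ in each variable; for $\gamma=1$ I would take $r$ slightly larger than $2$, or trade a power of $\varrho$ for moments. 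This term therefore tends to $0$ uniformly on $[0,T]$.

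The second part is the bilinear difference $Q_n(f_m,f_m)-Q_n(f_n,f_n)=Q_n(f_m-f_n,f_m)+Q_n(f_n,f_m-f_n)$. Using $\varrho^{2+\gamma}/(p^0q^0)\le C(p^0q^0)^{\gamma/2}$ and energy conservation $p'^0+q'^0=p^0+q^0$, it is controlled by
\begin{align*}
C\,\|f_m-f_n\|_{L^1_{1+\gamma/2}}\big(\|f_m\|_{L^1_{1+\gamma/2}}+\|f_n\|_{L^1_{1+\gamma/2}}\big).
\end{align*}

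\textbf{Step 2: the main obstacle.} The difficulty is that the bilinear term needs the stronger weight $1+\gamma/2$, while the left side only provides $L^1_1$. This is exactly the difficulty of the hard-potential theory. I would first try the classical Arkeryd/Mischler--Wennberg device: use the sign function to obtain a favorable sign for the loss term with the top weight. Concretely, for the term with $\mathrm{sgn}(f_m-f_n)$ multiplying $(1+p^0)$, the loss contribution $-|f_m-f_n|(p)(1+p^0)\int\ldots f(q)$ cancels the dangerous part. What remains is bounded with weight $(1+p^0)$ in $p$ and $(q^0)^{1+\gamma/2}\le (q^0)^2$ in $q$, which is controlled by the $L^1_2$ bound. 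This yields
\begin{align*}
\frac{d}{dt}\|f_m-f_n\|_{L^1_1}\le C\|f_m-f_n\|_{L^1_1}+o(1).
\end{align*}
By Gronwall, $f_n$ is then Cauchy in $C([0,T];L^1\cap L^1_1)$.

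If this cancellation fails, the fallback is to interpolate between $L^1_1$ and the uniform $L^1_r$ bound for large $r$. This gives $\|g\|_{L^1_{1+\gamma/2}}\le \|g\|_{L^1_1}^{\theta}C$, and the resulting Osgood-type inequality would give convergence but not uniqueness in the stated class.

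\textbf{Step 3: passing to the limit.} Letting $f$ be the limit, we have $f\ge 0$, and the moment bound \eqref{thm hard r} follows by Fatou from Lemma \ref{lem f_n}. To pass to the limit in the mild (integrated) form of the equation, I would use the same truncation estimate together with Step 1, so that $Q_n(f_n,f_n)\to Q(f,f)$ in $C([0,T];L^1)$. Continuity of $t\mapsto Q(f,f)$ in $L^1\cap L^1_1$ then gives $f\in C^1$. Since $T$ is arbitrary, the solution is global.

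\textbf{Step 4: uniqueness.} Uniqueness is the same Gronwall estimate as in Step 2, applied to two solutions with $L^1_2$ control, now without the cut-off term.
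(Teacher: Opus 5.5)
Your main line is the paper's proof. The cut-off solutions are shown Cauchy in $L^1\cap L^1_1$, and the tail on $\{\varrho\ge k\}$ is made $O(k^{-1})$ by your $\delta$-device with $\delta=1$, which already handles $\gamma=1$ using only $L^1_2$ bounds. The bilinear part is closed by exactly the sign cancellation you describe: the paper writes it as $(p'^0)^r+(q'^0)^r-(q^0)^r+(p^0)^r=2(p^0)^r$ for $r=0,1$ via \eqref{energy}, so $|f_k-f_n|$ appears only in $L^1_{\gamma/2}$ and $f_k+f_n$ in $L^1_{1+\gamma/2}$.

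Hence the cancellation does not fail and your fallback is unnecessary; it would not have worked anyway, since $y'\le Cy^\theta+o(1)$ with $\theta<1$ is not of Osgood type and gives neither convergence nor uniqueness. Your Step 4 gives uniqueness only among solutions with moment control beyond $L^1_1$ (at least $L^1_{1+\gamma/2}$), not in all of $C^1([0,\infty);L^1\cap L^1_1)$, although the paper's proof does not address uniqueness at all.
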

\begin{proof}
We need to show that the solutions $ f_n $ of Lemma \ref{lem f_n} converge to the solution of the original Boltzmann equation \eqref{B} as $ n \to \infty $. We will show that the sequence $ f_n $ converges in $ L^1 \cap L^1_1 $. Let $ k < n $, and consider
\begin{align*}
\frac{ \partial ( f_k - f_n ) }{ \partial t } & = \iint \1_{ \{ \varrho \leq k \} } \frac{ \varrho^{ 2 + \gamma } }{ p^0 q^0 } ( f_k ( p' ) f_k ( q' ) - f_k ( p ) f_k ( q ) ) \, d \omega \, d^3 q \\
& \quad - \iint \1_{ \{ \varrho \leq n \} } \frac{ \varrho^{ 2 + \gamma } }{ p^0 q^0 } ( f_n ( p' ) f_n ( q' ) - f_n ( p ) f_n ( q ) ) \, d \omega \, d^3 q ,
\end{align*}
where $ \1 $ denotes the indicator function. We write the above as
\begin{align*}
\frac{ \partial ( f_k - f_n ) }{ \partial t } & = - \iint \1_{ \{ k \leq \varrho \leq n \} } \frac{ \varrho^{ 2 + \gamma } }{ p^0 q^0 } ( f_k ( p' ) f_k ( q' ) - f_k ( p ) f_k ( q ) ) \, d \omega \, d^3 q \\
& \hspace{-1cm} + \iint \1_{ \{ \varrho \leq n \} } \frac{ \varrho^{ 2 + \gamma } }{ p^0 q^0 } ( f_k ( p' ) f_k ( q' ) - f_k ( p ) f_k ( q ) - f_n ( p' ) f_n ( q' ) + f_n ( p ) f_n ( q ) ) \, d \omega \, d^3 q .
\end{align*}
Multiplying the above by $ \mbox{sgn} ( f_k - f_n ) ( p ) $ and $ ( p^0 )^r $ with $ r = 0 , 1 $, and integrating over $ \bbr^3_p $, we obtain
\begin{align}
& \frac{ d }{ d t } \| f_k - f_n \|_{ L^1_r } \nonumber \\
& \leq \iiint \1_{ \{ k \leq \varrho \leq n \} } \frac{ \varrho^{ 2 + \gamma } }{ p^0 q^0 } ( f_k ( p' ) f_k ( q' ) + f_k ( p ) f_k ( q ) ) ( p^0 )^r d \omega \, d^3 q \, d^3 p \nonumber \\
& \quad + \frac12 \iiint \1_{ \{ \varrho \leq n \} } \frac{ \varrho^{ 2 + \gamma } }{ p^0 q^0 } \Big( | f_k - f_n | ( p' ) ( f_k + f_n ) ( q' ) + ( f_k + f_n ) ( p' ) | f_k - f_n | ( q' ) \nonumber \\
& \hspace{2cm} - | f_k - f_n | ( p ) ( f_k + f_n ) ( q ) + ( f_k + f_n ) ( p ) | f_k - f_n | ( q ) \Big) ( p^0 )^r d \omega \, d^3 q \, d^3 p \nonumber \\
& = \iiint \1_{ \{ k \leq \varrho \leq n \} } \frac{ \varrho^{ 2 + \gamma } }{ p^0 q^0 } f_k ( p ) f_k ( q ) ( ( p^0 )^r + ( p'^0 )^r ) \, d \omega \, d^3 q \, d^3 p \nonumber \\
& \quad + \frac12 \iiint \1_{ \{ \varrho \leq n \} } \frac{ \varrho^{ 2 + \gamma } }{ p^0 q^0 } ( f_k + f_n ) ( p ) | f_k - f_n | ( q ) ( ( p'^0 )^r + ( q'^0 )^r - ( q^0 )^r + ( p^0 )^r ) \, d \omega \, d^3 q \, d^3 p \nonumber \\
& \leq C \iiint \1_{ \{ k \leq \varrho \leq n \} } \frac{ \varrho^{ 2 + \gamma } }{ p^0 q^0 } f_k ( p ) f_k ( q ) ( p^0 )^r d \omega \, d^3 q \, d^3 p \label{est1} \\
& \quad + \iiint \1_{ \{ \varrho \leq n \} } \frac{ \varrho^{ 2 + \gamma } }{ p^0 q^0 } ( f_k + f_n ) ( p ) | f_k - f_n | ( q ) ( p^0 )^r d \omega \, d^3 q \, d^3 p , \label{est2}
\end{align}
where we used \eqref{energy} for $ r = 1 $. The quantity in \eqref{est1} can be estimated as
\begin{align*}
& \iiint \1_{ \{ k \leq \varrho \leq n \} } \frac{ \varrho^{ 2 + \gamma } }{ p^0 q^0 } f_k ( p ) f_k ( q ) ( p^0 )^r \, d \omega \, d^3 q \, d^3 p \\
& \leq \frac{ C }{ k } \iint \frac{ \varrho^{ 3 + \gamma } }{ p^0 q^0 } f_k ( p ) f_k ( q ) ( p^0 )^r \, d^3 q \, d^3 p \\
& \leq \frac{ C }{ k } \| f_k \|_{ L^1 _{ r + \frac{ 1 + \gamma }{ 2 } } } \| f_k \|_{ L^1_{ \frac{ 1 + \gamma }{ 2 } } } \\
& \leq \frac{ C }{ k } ,
\end{align*}
since the quantities $ \| f_k \|_{ L^1 _{ r + ( 1 + \gamma ) / 2 } } $ and $ \| f_k \|_{ L^1_{ ( 1 + \gamma ) / 2 } } $ are bounded by Lemma \ref{lem f_n} for $ r = 0 , 1 $. The quantity in \eqref{est2} is estimated as
\begin{align*}
& \iiint \1_{ \{ \varrho \leq n \} } \frac{ \varrho^{ 2 + \gamma } }{ p^0 q^0 } ( f_k + f_n ) ( p ) | f_k - f_n | ( q ) ( p^0 )^r d \omega \, d^3 q \, d^3 p \\
& \leq C \iint ( f_k + f_n ) ( p ) | f_k - f_n | ( q ) ( p^0 )^{ r + \frac{ \gamma }{ 2 } } ( q^0 )^{ \frac{ \gamma }{ 2 } } \, d^3 q \, d^3 p \\
& \leq C \| f_k + f_n \|_{ L^1_{ r + \frac{ \gamma }{ 2 } } } \| f_k - f_n \|_{ L^1_{ \frac{ \gamma }{ 2 } } } \\
& \leq C \| f_k - f_n \|_{ L^1_{ \frac{ \gamma }{ 2 } } } .
\end{align*}
The last quantity can be estimated as the sum of $ \| f_k - f_n \|_{ L^1 } $ and $ \| f_k - f_n \|_{ L^1_1 } $ since $ 0 \leq \gamma \leq 1 $. We combine the above results to obtain
\begin{align*}
\frac{ d }{ d t } \left( \| f_k - f_n \|_{ L^1 } + \| f_k - f_n \|_{ L^1_1 } \right) \leq C \left( k^{ - 1 } + \| f_k - f_n \|_{ L^1 } + \| f_k - f_n \|_{ L^1_1 } \right) .
\end{align*}
This shows that the sequence $ f_k $ converges in $ L^1 \cap L^1_1 $ as $ k \to \infty $ on each interval $ [ 0 , T ] $, so that the solution exists globally in time. The property \eqref{thm hard r} can be obtained as in Lemma \ref{lem f_n}, and this completes the proof.
\end{proof}

\subsection{Proof of Theorem \ref{thm soft}} \label{sec Es}
In this part we show that the solutions $ f_n $ of Lemma \ref{lem f_ns} converge to the solution of the original Boltzmann equation \eqref{B} for $ - 2 < \gamma < 0 $. In \cite{Lee21}, the massless Boltzmann equation was considered in a cosmological setting with a restricted range of the collision cross-section by $ \gamma > - 1 $. This restriction was necessary to obtain the integrability of $ R^{ - 3 - \gamma } $, which leads to global existence. In a different cosmological setting, the massless Boltzmann equation was studied in \cite{LNT1}, restricting the collision cross-section to $ \gamma < - 1 $ to avoid the singularity at $ t = 0 $. In this paper, for soft interactions, we will only consider local existence and have no singularity at $ t = 0 $, so the restrictions of \cite{Lee21, LNT1} do not apply to our case. We obtain local existence for $ - 2 < \gamma < 0 $ in Theorem \ref{thm soft}, which we restated below.

\begin{thm}\label{thm soft}
Let $ 0 \leq f_0 \in L^1 ( \bbr^3 ) \cap L^1_{ - 2 } ( \bbr^3 ) \cap L^\infty_w ( \bbr^3 ) $ be an initial datum of the Boltzmann equation \eqref{B} with $ - 2 < \gamma < 0 $. Then, there exists a $ T > 0 $ such that the Boltzmann equation \eqref{B} has a unique non-negative solution $ f \in C^1 ( [ 0 , T ] ; L^1 ( \bbr^3 ) \cap L^1_{ - 1 } ( \bbr^3 ) ) $ satisfying
\begin{align}
\sup_{ t \in [ 0 , T ] } ( \| f ( t ) \|_{ L^1_{ - 2 } } + \| f ( t ) \|_{ L^\infty_w } ) \leq C , \label{thm soft w}
\end{align}
where $ C > 0 $ depends on $ T $.
\end{thm}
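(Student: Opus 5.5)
We mirror the proof of Theorem \ref{thm hard}: we show that the solutions $ f_n $ of Lemma \ref{lem f_ns} form a Cauchy sequence in $ L^1 \cap L^1_{ - 1 } $ on the interval $ [ 0 , T ] $ given by Lemma \ref{lem f_ns}. For $ k < n $ we write
\begin{align*}
\frac{ \partial ( f_k - f_n ) }{ \partial t } & = - \iint \1_{ \{ n^{ - 1 } \leq \varrho \leq k^{ - 1 } \} } \frac{ \varrho^{ 2 + \gamma } }{ p^0 q^0 } ( f_n ( p' ) f_n ( q' ) - f_n ( p ) f_n ( q ) ) \, d \omega \, d^3 q \\
& \quad + \iint \1_{ \{ \varrho \geq k^{ - 1 } \} } \frac{ \varrho^{ 2 + \gamma } }{ p^0 q^0 } ( f_k ( p' ) f_k ( q' ) - f_k ( p ) f_k ( q ) - f_n ( p' ) f_n ( q' ) + f_n ( p ) f_n ( q ) ) \, d \omega \, d^3 q .
\end{align*}
We multiply by $ \mbox{sgn} ( f_k - f_n ) ( p ) ( p^0 )^r $ with $ r = 0 , - 1 $ and integrate. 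Using \eqref{dpdq} to move the gain terms to pre-collision variables turns the weights into $ ( p'^0 )^r $ and $ ( q'^0 )^r $. For $ r = 0 $ this is harmless. For $ r = - 1 $, however, these weights are singular, and we will integrate them in $ \omega $ with Lemma \ref{lem 1/p'}: $ \int ( p'^0 )^{ - 1 } d \omega \leq C \varrho^{ - a } ( n^0 )^{ a - 1 } $ with a suitable $ a \in ( 0 , 1 ] $.

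\emph{Cut-off term.} On $ \{ \varrho \leq k^{ - 1 } \} $ we write $ \varrho^{ 2 + \gamma } \leq k^{ - \epsilon } \varrho^{ 2 + \gamma - \epsilon } $ with small $ \epsilon > 0 $ chosen so that $ 2 + \gamma - \epsilon - a > 0 $. Using \eqref{l1} and $ \varrho \leq 2 \sqrt{ p^0 q^0 } $, the remaining integrand is at most
\[
C ( p^0 q^0 )^{ - 1 + ( 2 + \gamma - \epsilon - a ) / 2 } ( n^0 )^{ a - 1 } f_n ( p ) f_n ( q ) .
\]
The uniform bounds $ \| f_n \|_{ L^\infty_w } + \| f_n \|_{ L^1_{ - 2 } } \leq C $ (Lemma \ref{lem f_ns}) make this integrable. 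Near $ 0 $ we use the pointwise bound $ f_n \leq C e^{ - p^0 } / p^0 $ when necessary, exactly as in the $ L^1_{ - 2 } $ estimate of Section \ref{sec ECOs}. This yields a bound $ C k^{ - \epsilon } $.

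\emph{Difference term.} We split $ f_k ( p' ) f_k ( q' ) - f_n ( p' ) f_n ( q' ) $ as in the hard case. The pieces involving $ | f_k - f_n | ( p ) $ or $ | f_k - f_n | ( q ) $ after the change of variables produce integrands of the form
\[
\frac{ \varrho^{ 2 + \gamma } }{ p^0 q^0 } ( f_k + f_n ) ( p ) \, | f_k - f_n | ( q ) \, \big( ( p'^0 )^r + ( q'^0 )^r + ( p^0 )^r + ( q^0 )^r \big) ,
\]
where we drop the favourable sign of the loss term in $ | f_k - f_n | ( p ) $. The function $ ( f_k + f_n ) ( p ) $ is controlled pointwise by $ C e^{ - p^0 } / p^0 $. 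After the $ \omega $-integration (Lemma \ref{lem 1/p'} for the primed weights) and the $ p $-integration, using \eqref{l1} to handle $ \sin^{ \cdot } ( \phi / 2 ) $, we want a bound of the form $ C ( ( q^0 )^{ - 1 } + 1 ) $ in $ q $. That would give
\[
\frac{ d }{ d t } \left( \| f_k - f_n \|_{ L^1 } + \| f_k - f_n \|_{ L^1_{ - 1 } } \right) \leq C k^{ - \epsilon } + C \left( \| f_k - f_n \|_{ L^1 } + \| f_k - f_n \|_{ L^1_{ - 1 } } \right) .
\]
Grönwall then gives the Cauchy property, and the same difference estimate with $ k = n = \infty $ gives uniqueness. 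Since $ \| f_k \|_{ L^1_{ - 2 } } $ and $ \| f_k \|_{ L^\infty_w } $ are bounded uniformly, the limit inherits \eqref{thm soft w} by Fatou's lemma and pointwise a.e. convergence along a subsequence. Finally, $ C^1 $ regularity in time follows by passing to the limit in the integrated equation.

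\emph{Main obstacle.} The hardest step is the $ p $-integral in the difference term for $ \gamma $ close to $ - 2 $. There $ \varrho^{ 2 + \gamma } $ offers little decay, while $ ( p'^0 )^{ - 1 } $ and $ 1 / ( p^0 q^0 ) $ are singular, so we must check that $ \int \varrho^{ 2 + \gamma - a } ( n^0 )^{ a - 1 } ( p^0 )^{ - 2 } e^{ - p^0 } ( q^0 )^{ - 1 } d^3 p $ is $ \lesssim ( q^0 )^{ - 1 } $ for small $ q^0 $. I would try first $ a = 2 + \gamma $, where the $ \varrho $-factor cancels and the bound reduces to $ ( n^0 )^{ 1 + \gamma } $. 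For $ \gamma > - 1 $ this is bounded by $ ( p^0 )^{ 1 + \gamma } $ up to constants, and for $ \gamma \leq - 1 $ by $ ( p^0 )^{ 1 + \gamma } $ directly, since $ n^0 \geq p^0 $. In either case we get the integrable weight $ ( p^0 )^{ - 1 + \gamma } e^{ - p^0 } $ (integrable since $ \gamma > - 2 $). If this weight is insufficient, we would instead use $ | f_k - f_n | ( q ) \leq C e^{ - q^0 } / q^0 $ together with $ \| f_k - f_n \|_{ L^1 } $ for the symmetric piece.
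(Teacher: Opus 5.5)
Your overall strategy works, but one step fails as written. In the difference term you give up the favourable sign of the loss term and carry it as $+(q^0)^r$. For $r=-1$ this piece cannot be bounded by $C((q^0)^{-1}+1)$. With $(f_k+f_n)(p)\le Ce^{-p^0}/p^0$ and $\varrho^{2+\gamma}=2^{2+\gamma}(p^0q^0)^{1+\gamma/2}\sin^{2+\gamma}(\phi/2)$, the $p$-integral of $\varrho^{2+\gamma}(p^0)^{-1}(q^0)^{-2}(f_k+f_n)(p)$ is of order $(q^0)^{-1+\gamma/2}$. This is strictly more singular than $(q^0)^{-1}$, because $\gamma<0$.

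Weighted $L^1$ norms do no better. You would need $\|f_k-f_n\|_{L^1_{-1+\gamma/2}}$, and interpolating with the uniform $L^1_{-2}$ bound only gives $C\|f_k-f_n\|_{L^1_{-1}}^{1+\gamma/2}$. That is a sublinear term, for which Gr\"onwall yields no convergence. The remedy is to keep the sign. This piece equals $-\frac12\iiint\frac{\varrho^{2+\gamma}}{p^0q^0}(f_k+f_n)(p)\,|f_k-f_n|(q)\,(q^0)^r\,d\omega\,d^3q\,d^3p\le 0$ and is simply dropped. That is why only $J_1,J_2,J_3$ appear in the paper.

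There is also a smaller slip. For $-1<\gamma<0$ it is false that $(n^0)^{1+\gamma}\lesssim (p^0)^{1+\gamma}$: let $p^0\to0$ with $q^0$ fixed. Use $(n^0)^{1+\gamma}\le (p^0)^{1+\gamma}+(q^0)^{1+\gamma}$ instead. The extra piece contributes $(q^0)^{\gamma}\int (p^0)^{-2}e^{-p^0}\,d^3p\lesssim 1+(q^0)^{-1}$, so your target bound survives. Your choice $a=2+\gamma$ can exceed $1$, contrary to the announced $a\in(0,1]$, but this is harmless since Lemma~\ref{lem 1/p'} holds for every $a>0$.

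With these corrections your argument closes. It differs from the paper's in how the factor $(f_k+f_n)(p)$ is controlled.

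The paper uses only weighted $L^1$ norms in the convergence step. For the $(p'^0)^{-1}$ weight it takes $a=-\delta$ in Lemma~\ref{lem 1/p'} and uses the Young-type inequality \eqref{Young}. This places exactly $(q^0)^{-1}$ on $|f_k-f_n|(q)$ and $(p^0)^{\gamma}$ on $(f_k+f_n)(p)$. The resulting constants are $\|f_k+f_n\|_{L^1_\gamma}$, $\|f_k+f_n\|_{L^1_{-1+\gamma/2}}$ and $\|f_k+f_n\|_{L^1_{\gamma/2}}$, all controlled by $L^1\cap L^1_{-2}$. The $L^\infty_w$ bound enters only through Lemma~\ref{lem f_ns}.

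You instead insert the pointwise bound $Ce^{-p^0}/p^0$, choose $a=2+\gamma$ so that the powers of $\varrho$ cancel, and integrate in $p$ by hand. This avoids the auxiliary parameter $\delta$ and the asymmetric weight splitting, and is more transparent. The cost is that the contraction estimate uses $L^\infty_w$ control of both functions. Your uniqueness argument is therefore confined to solutions bounded in $L^\infty_w$; that is the class in the theorem, so this is acceptable. The paper's difference estimate, by contrast, needs only weighted $L^1$ bounds.

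Your cut-off bound $Ck^{-\epsilon}$ closes, for instance via $(n^0)^{a-1}\le (p^0q^0)^{(a-1)/2}$ for $a\le 1$, which lands in $L^1_{(-1+\gamma-\epsilon)/2}$. It plays the same role as the paper's bounds on $J_4,J_5$ with $k^{-2-\gamma-\delta}$ and $k^{-2-\gamma}$.
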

\begin{proof}
In this theorem we will show that the sequence $ f_n $ converges in $ L^1 \cap L^1_{ - 1 } $. We follow the computation in the proof of Theorem \ref{thm hard} to obtain
\begin{align*}
& \frac{ d }{ d t } \| f_k - f_n \|_{ L^1_r } \leq J_1 + J_2 + J_3 + J_4 + J_5 ,
\end{align*}
where
\begin{align*}
J_1 & = \frac12 \iiint \frac{ \varrho^{ 2 + \gamma } }{ p^0 q^0 } ( f_k + f_n ) ( p ) | f_k - f_n | ( q ) ( p'^0 )^r d \omega \, d^3 q \, d^3 p , \\
J_2 & = \frac12 \iiint \frac{ \varrho^{ 2 + \gamma } }{ p^0 q^0 } ( f_k + f_n ) ( p ) | f_k - f_n | ( q ) ( q'^0 )^r d \omega \, d^3 q \, d^3 p , \\
J_3 & = \frac12 \iiint \frac{ \varrho^{ 2 + \gamma } }{ p^0 q^0 } ( f_k + f_n ) ( p ) | f_k - f_n | ( q ) ( p^0 )^r d \omega \, d^3 q \, d^3 p , \\
J_4 & = \iiint \1_{ \{ \varrho \leq k^{ - 1 } \} } \frac{ \varrho^{ 2 + \gamma } }{ p^0 q^0 } f_n ( p ) f_n ( q ) ( p'^0 )^r d \omega \, d^3 q \, d^3 p , \nonumber \\
J_5 & = \iiint \1_{ \{ \varrho \leq k^{ - 1 } \} } \frac{ \varrho^{ 2 + \gamma } }{ p^0 q^0 } f_n ( p ) f_n ( q ) ( p^0 )^r d \omega \, d^3 q \, d^3 p , \nonumber
\end{align*}
for $ r = 0 , - 1 $. We first choose $ - 1 < \delta < 0 $ satisfying
\begin{align*}
- 2 < \gamma < \delta < 0 , \qquad - 2 < \gamma + \delta < 0 ,
\end{align*}
and consider Young's inequality:
\begin{align}
( p^0 + q^0 )^{ 1 + \delta } \geq c ( p^0 )^{ \frac{ - \gamma + \delta }{ 2 } } ( q^0 )^{ 1 + \frac{ \gamma + \delta }{ 2 } } . \label{Young}
\end{align}
Then, $ J_1 $ is estimated as follows. For $ r = 0 $, we have
\begin{align*}
J_1 & \leq C \iint ( p^0 q^0 )^{ \frac{ \gamma }{ 2 } } ( f_k + f_n ) ( p ) | f_k - f_n | ( q ) \, d^3 q \, d^3 p \\
& \leq C \| f_k - f_n \|_{ L^1_{ \frac{ \gamma }{ 2 } } } \\
& \leq C ( \| f_k - f_n \|_{ L^1 } + \| f_k - f_n \|_{ L^1_{ - 1 } } ) ,
\end{align*}
since $ - 2 < \gamma < 0 $. For $ r = - 1 $, we apply Lemma \ref{lem 1/p'} and use Young's inequality \eqref{Young} to obtain
\begin{align*}
J_1 & \leq C \iint \frac{ \varrho^{ 2 + \gamma + \delta } }{ p^0 q^0 ( p^0 + q^0 )^{ 1 + \delta } } ( f_k + f_n ) ( p ) | f_k - f_n | ( q ) \, d^3 q \, d^3 p , \\
& \leq C \iint \frac{ \varrho^{ 2 + \gamma + \delta } }{ p^0 q^0 ( p^0 )^{ \frac{ - \gamma + \delta }{ 2 } } ( q^0 )^{ 1 + \frac{ \gamma + \delta }{ 2 } } } ( f_k + f_n ) ( p ) | f_k - f_n | ( q ) \, d^3 q \, d^3 p , \\
& \leq C \iint \frac{ ( p^0 q^0 )^{ 1 + \frac{ \gamma + \delta }{ 2 } } }{ ( p^0 )^{ 1 + \frac{ - \gamma + \delta }{ 2 } } ( q^0 )^{ 2 + \frac{ \gamma + \delta }{ 2 } } } ( f_k + f_n ) ( p ) | f_k - f_n | ( q ) \, d^3 q \, d^3 p , \\
& \leq C \iint \frac{ ( p^0 )^{ \gamma } }{ q^0 } ( f_k + f_n ) ( p ) | f_k - f_n | ( q ) \, d^3 q \, d^3 p , \\
& \leq C \| f_k + f_n \|_{ L^1_\gamma } \| f_k - f_n \|_{ L^1_{ - 1 } } \\
& \leq C \| f_k - f_n \|_{ L^1_{ - 1 } } ,
\end{align*}
since $ - 2 < \gamma < 0 $. The estimate of $ J_2 $ is the same as $ J_1 $:
\begin{align*}
J_2 \leq C ( \| f_k - f_n \|_{ L^1 } + \| f_k - f_n \|_{ L^1_{ - 1 } } ) ,
\end{align*}
for $ r = 0 , - 1 $. The estimate of $ J_3 $ for $ r = 0 $ is the same as $ J_1 $:
\begin{align*}
J_3 \leq C ( \| f_k - f_n \|_{ L^1 } + \| f_k - f_n \|_{ L^1_{ - 1 } } ) ,
\end{align*}
but for $ r = - 1 $, we estimate as follows:
\begin{align*}
J_3 & \leq C \iint \frac{ \varrho^{ 2 + \gamma } }{ ( p^0 )^2 q^0 } ( f_k + f_n ) ( p ) | f_k - f_n | ( q ) \, d^3 q \, d^3 p \\
& \leq C \iint \frac{ ( q^0 )^{ \frac{ \gamma }{ 2 } } }{ ( p^0 )^{ 1 - \frac{ \gamma }{ 2 } } } ( f_k + f_n ) ( p ) | f_k - f_n | ( q ) \, d^3 q \, d^3 p \\
& \leq C \| f_k + f_n \|_{ L^1_{ - 1 + \frac{ \gamma }{ 2 } } } \| f_k - f_n \|_{ L^1_{ \frac{ \gamma }{ 2 } } } \\
& \leq C ( \| f_k - f_n \|_{ L^1 } + \| f_k - f_n \|_{ L^1_{ - 1 } } ) .
\end{align*}
For $ J_4 $, we obtain for $ r = 0 $,
\begin{align*}
J_4 & \leq C \iint \1_{ \{ \varrho \leq k^{ - 1 } \} } \frac{ \varrho^{ 2 + \gamma } }{ p^0 q^0 } f_n ( p ) f_n ( q ) \, d^3 q \, d^3 p \\
& \leq C k^{ - 2 - \gamma } \| f_n \|_{ L^1_{ - 1 } }^2 .
\end{align*}
For $ r = - 1 $, we follow the above computation of the estimate of $ J_1 $ to obtain
\begin{align*}
J_4 & \leq C \iint \1_{ \{ \varrho \leq k^{ - 1 } \} } \frac{ \varrho^{ 2 + \gamma + \delta } }{ ( p^0 )^{ 1 + \frac{ - \gamma + \delta }{ 2 } } ( q^0 )^{ 2 + \frac{ \gamma + \delta }{ 2 } } } f_n ( p ) f_n ( q ) \, d^3 q \, d^3 p \\
& \leq C k^{ - 2 - \gamma - \delta } \iint \frac{ 1 }{ ( p^0 )^{ 1 + \frac{ - \gamma + \delta }{ 2 } } ( q^0 )^{ 2 + \frac{ \gamma + \delta }{ 2 } } } f_n ( p ) f_n ( q ) \, d^3 q \, d^3 p \\
& \leq C k^{ - 2 - \gamma - \delta } \| f_n \|_{ L^1_{ - 1 - \frac{ - \gamma + \delta }{ 2 } } } \| f_n \|_{ L^1_{ - 2 - \frac{ \gamma + \delta }{ 2 } } } \\
& \leq C k^{ - 2 - \gamma - \delta } ,
\end{align*}
where the weighted norms of $ f_n $ are bounded by Lemma \ref{lem f_ns}, since
\begin{align*}
- 2 < - 1 + \frac{ \gamma }{ 2 } < - 1 + \frac{ \gamma - \delta }{ 2 } < - 1 , \\
- 2 < - 2 - \frac{ \gamma + \delta }{ 2 } < - 1 .
\end{align*}
The estimate of $ J_5 $ with $ r = 0 $ is the same as $ J_4 $:
\begin{align*}
J_5 \leq C k^{ - 2 - \gamma } ,
\end{align*}
and for $ r = - 1 $, we have
\begin{align*}
J_5 & \leq C \iint \1_{ \{ \varrho \leq k^{ - 1 } \} } \frac{ \varrho^{ 2 + \gamma } }{ ( p^0 )^2 q^0 } f_n ( p ) f_n ( q ) \, d^3 q \, d^3 p \\
& \leq C k^{ - 2 - \gamma } \| f_n \|_{ L^1_{ - 2 } } \| f_n \|_{ L^1_{ - 1 } } .
\end{align*}
We combine the above estimates and apply Lemma \ref{lem f_ns} to conclude that the sequence $ f_k $ converges and there exists a solution $ f $ in $ L^1 ( \bbr^3 ) \cap L^1_{ - 1 } ( \bbr^3 ) $ on a time interval $ [ 0 , T ] $. The property \eqref{thm soft w} can be obtained by following the calculation in the proof of Lemma \ref{lem f_ns}, and this completes the proof.
\end{proof}

\section*{Acknowledgements}
In the course of this work the authors have benefitted from the financial support of a number of institutions. We all acknowledge support from the Centre International de Rencontres Mathematiques - Marseille Luminy where we benefitted from the Research in Residence program (CIRM 3431). We also acknowledge financial support under the project PID2024-155175NB-I00 (Agencia Estatal de Investigación and Ministerio de Ciencia, Innovación y Universidades). P.T. acknowledges support from St John's College Oxford. This work was supported by the National Research Foundation of Korea (NRF) grant funded by the Korea government (MSIT) (No.\ RS-2024-00451692).

\bibliographystyle{abbrv}

\end{document}